\newtheorem {theorem} {Theorem}
\newtheorem {proposition} [theorem]{Proposition}
\newtheorem {lemma}  [theorem]{Lemma}
\def\la{\lambda}
\newcommand{\bbox}{\ \hfill\rule[-1mm]{2mm}{3.2mm}}
\title{\large\bf The embedding flows of $C^\infty$ hyperbolic diffeomorphisms}
\author{\normalsize\bf\sc Xiang Zhang\footnote{\small
The author is partially supported by
NNSF of China grants 10671123 and 10831003, and the Shanghai Pujiang Program 09PJD013.}\\
\normalsize\it Department of Mathematics, Shanghai Jiaotong
               University,  \\ \normalsize\it Shanghai 200240,
               The People's Republic of China.
      \\ \normalsize  E-mail: xzhang@sjtu.edu.cn\\}
\date{}
\begin{document}
\maketitle
\begin{abstract}
\noindent  In [{\it American J. Mathematics}, 124(2002),
107--127] we proved that  for a germ of  $C^\infty$
hyperbolic diffeomorphisms $F(x)=Ax+f(x)$ in $(\mathbb R^n,0)$, if
$A$ has a real logarithm with its eigenvalues weakly nonresonant,
then $F(x)$ can be embedded in a $C^\infty$ autonomous differential system.
Its proof was very complicated, which involved the existence of embedding periodic
vector field of $F(x)$ and the extension of the Floquet's theory to nonlinear $C^\infty$ periodic
differential systems. In this paper we shall provide a simple and
direct proof to this last result.

Next we shall show that the weakly nonresonant condition in the last result
on the real logarithm of $A$ is necessary for some $C^\infty$ diffeomorphisms $F(x)=Ax+f(x)$ to
have $C^\infty$ embedding flows. 

Finally we shall prove that a germ of $C^\infty$ hyperbolic diffeomorphisms $F(x)=Ax+f(x)$ with $f(x)=O(|x|^2)$
in $(\mathbb R^2,0)$ has a $C^\infty$ embedding flow if and only if
either $A$ has no negative eigenvalues or $A$ has two equal negative eigenvalues and it can be diagonalizable.

\hskip0.06mm

\noindent {\bf Key words and phrases:} Local diffeomorphism,
embedding flow, hyperbolicity, normal form.

\noindent {\bf 2000 AMS mathematical subject classification:}
34A34, 34C20, 34C41, 37G05.
\end{abstract}

\bigskip
\section{Introduction and statement of
the main results}

\setcounter{section}{1}
\setcounter{equation}{0}\setcounter{theorem}{0}

 \noindent For a
germ of $C^k$ smooth diffeomorphims in $(\mathbb R^n,0)$ with
$k\in\mathbb N\cup\{\infty,\omega\}$, a vector field $\mathcal X$
defined in $(\mathbb R^n,0)$ is an {\it embedding vector field} of
${F}(x)$ if $ F( x)$ is the time one map of the flow induced by
$\mathcal X$. The flow of $\mathcal X$ is called an {\it embedding
flow} of $F(x)$. For a vector field $\mathcal Y$ depending on the time
(if it is periodic in the time, we assume that its period is $1$), if $F(x)$ is the
time one map of its solutions, then $\mathcal Y$ is also called an 
{\it embedding vector field} of $F(x)$.

The embedding flow problem is classical, which asks to solve the existence and
smoothness of embedding flows for a given diffeomorphism with some
smoothness. We should say that the embedding flow problem appears
naturally when we study the relation between diffeomorphisms and
vector fields.  Recently the results on the existence of embedding flows were successfully applied to study
the integrability, especially the inverse integrating factor, of planar
differential systems (see e.g. \cite{EP09,GGG09,GM09}).

For 1--dimensional diffeomorphisms the embedding flow problem has
been intensively studied (see for instance,
\cite{Be,BC,La1,La2,Li}). For higher dimensional diffeomorphisms, Palis
\cite{Pa} in 1974 pointed out that the diffeomorphisms admitting
embedding flows are few in the Baire sense.

In 1988 Arnold \cite[p.200]{Ar} mentioned that any $C^\infty$
local diffeomorphism  $F (x)= Ax + o(|x|)$ in $(\mathbb R^n, 0)$
with $A$ having a real logarithm admits a $C^\infty$ periodic
embedding vector field in $(\mathbb R^n, 0)$. In 1992 Kuksin and
P\"{o}schel \cite{KP} solved the problem on embedding a $C^\infty$
smooth or an analytic symplectic diffeomorphism into a $C^\infty$
smooth or an analytic periodic Hamiltonian vector field in a
neighborhood of the origin.

The last two results proved the existence of periodic embedding
vector fields for a given diffeomorphism. Arnold \cite[p.200]{Ar} 
mentioned that it is usually not possible to embed a given mapping
in the phase flow of an autonomous system.  This implies that the
embedding flow problem is much more involved.

In the past decade there are some progresses on the embedding flow problem. 
For recalling these results we need some definitions.

For a local diffeomorphism $F(x)=Ax+f(x)$ in $(\mathbb
R^n,0)$, the eigenvalues $\lambda=(\lambda_1,\ldots,\lambda_n)$ of
$A$ is {\it resonant} if there exists some $k=(k_1,\ldots,k_n)\in\mathbb Z_+^n$
with $|k|\ge 2$ such that $\lambda_j=\lambda^k$ for
some $j\in\{1,\ldots,n\}$, where $\mathbb Z_+$ denotes the set of
nonnegative integers, and as usual $|k|=k_1+\ldots+k_n$ and  $\lambda^k=\lambda_1^{k_1}\ldots\lambda_n^{k_n}$.

For a vector field $\mathcal X(x)=Bx+v(x)$, the eigenvalues $\mu=(\mu_1,\ldots,\mu_n)$ of
$B$ is
\begin{itemize}
\item{} {\it resonant} if there exists some $k=(k_1,\ldots,k_n)\in\mathbb Z_+^n$
with $|k|\ge 2$ such that $\mu_j=\langle k,\mu\rangle$ for
some $j\in\{1,\ldots,n\}$, where $\langle k,\mu\rangle=k_1\mu_1+\ldots+k_n\mu_n$;
\item{} {\it weakly resonant} if there exists some $k\in\mathbb Z_+^n$
with $|k|\ge 2$ such that $\mu_j-\langle k,\mu\rangle=2l\pi\sqrt{-1}$ for
some $j\in\{1,\ldots,n\}$ and $l\in\mathbb Z\setminus \{0\}$.
\end{itemize}

Now we recall some recent results on the embedding flow problem. In 2002 Li, Llibre and Zhang \cite{LLZ} solved the
embedding flow problem for a class of $C^\infty$ hyperbolic diffeomorphisms, stated 
as follows.
\begin{theorem} \label{th1} For any $C^\infty$ locally hyperbolic
diffeomorphism ${ F}(x)= A x+f(x)$ with ${f}(x)=O(|x|^2)$ in
$(\mathbb R^n,0)$, if ${ A}$ has a real logarithm $B$ with its
eigenvalues weakly nonresonant, then the diffeomorphism $
F(x)$ admits a $C^\infty$ embedding flow induced by a $C^\infty$ vector field of
the form $Bx+v(x)$ with $v(x)=O(|x|^2)$.
\end{theorem}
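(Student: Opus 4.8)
The plan is to first construct a \emph{formal} embedding vector field of $F$ with linear part $B$, and then to upgrade it to a genuine $C^\infty$ one by exploiting the hyperbolicity. Seek $\mathcal Y(x)=Bx+v(x)$, with $v=\sum_{m\ge 2}v_{(m)}$ a formal series of homogeneous terms, and let $\Phi^t$ denote its (formal) flow. The variation-of-constants identity $\Phi^1(x)=e^{B}x+\int_0^1 e^{(1-s)B}\,v(\Phi^s(x))\,ds$ shows that the homogeneous degree-$m$ part of $\Phi^1$ equals $\mathcal L\,v_{(m)}+R_{(m)}$, where $R_{(m)}$ depends only on $v_{(2)},\dots,v_{(m-1)}$ and
\[
\mathcal L\,w(x)\;=\;\int_0^1 e^{(1-s)B}\,w\!\left(e^{sB}x\right)ds
\]
is a fixed linear operator preserving each space of homogeneous vector fields. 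Since $e^{B}=A$ the degree-$1$ parts of $\Phi^1$ and $F$ already coincide, so the requirement $\Phi^1=F$ becomes the triangular system $\mathcal L\,v_{(m)}=F_{(m)}-R_{(m)}$, $m\ge 2$, uniquely solvable order by order as soon as $\mathcal L$ is invertible on each homogeneous space.

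The weak nonresonance of the eigenvalues $\mu=(\mu_1,\dots,\mu_n)$ of $B$ is exactly what secures this invertibility. Invertibility of $\mathcal L$ is detected by its semisimplification, and on a monomial $x^k e_j$ with $|k|\ge2$ the latter acts by the scalar
\[
\frac{e^{\mu_j}-e^{\langle k,\mu\rangle}}{\mu_j-\langle k,\mu\rangle}=\frac{\lambda_j-\lambda^k}{\mu_j-\langle k,\mu\rangle}\ \ (\mu_j\ne\langle k,\mu\rangle),\qquad\text{resp.}\qquad e^{\mu_j}=\lambda_j\ \ (\mu_j=\langle k,\mu\rangle),
\]
which comes out of the formula above by inserting the semisimple part of $e^{sB}$. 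The second scalar is $\lambda_j\ne 0$. The first vanishes iff $\lambda_j=\lambda^k$, i.e. iff $\mu_j-\langle k,\mu\rangle\in 2\pi\sqrt{-1}\,\mathbb Z$; combined with $\mu_j\ne\langle k,\mu\rangle$ this forces $\mu_j-\langle k,\mu\rangle\in 2\pi\sqrt{-1}\,(\mathbb Z\setminus\{0\})$, i.e. $B$ is weakly resonant---excluded by hypothesis. Hence every such scalar is nonzero, $\mathcal L$ is invertible on each homogeneous space (over $\mathbb R$, since it is given by a real integral), the recursion runs without obstruction, and we obtain a formal field $\widehat{\mathcal Y}=Bx+\widehat v$ with $\widehat v=O(|x|^2)$ whose formal time-one map is the Taylor expansion of $F$. (One could instead first replace $F$ by its normal form, but the invertibility of $\mathcal L$ makes that detour unnecessary.)

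It remains to realize $\widehat{\mathcal Y}$ smoothly and exactly. By Borel's theorem pick a $C^\infty$ germ $\mathcal Y_0(x)=Bx+v_0(x)$, $v_0=O(|x|^2)$, with $\infty$-jet $\widehat{\mathcal Y}$ at $0$; since $0$ is an equilibrium, its flow is defined up to time one on a neighbourhood of $0$, so the time-one map $\widetilde F$ is a well-defined $C^\infty$ germ, and $j^\infty_0\widetilde F=j^\infty_0 F$ because the $\infty$-jet of a time-one map depends only on the $\infty$-jet of the field. Now $F$ and $\widetilde F$ are $C^\infty$ hyperbolic germs with a common $\infty$-jet at $0$, so by the Sternberg--Chen rigidity theorem for hyperbolic germs there is a $C^\infty$ diffeomorphism germ $H$ with $H-\mathrm{id}$ flat at $0$ and $F=H\circ\widetilde F\circ H^{-1}$. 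Then $\mathcal X:=H_*\mathcal Y_0$ is a $C^\infty$ vector field with flow $H\circ\Phi^t_{\mathcal Y_0}\circ H^{-1}$, hence with time-one map $H\circ\widetilde F\circ H^{-1}=F$; and $j^\infty_0 H=\mathrm{id}$ gives $j^\infty_0\mathcal X=\widehat{\mathcal Y}$, so $\mathcal X(x)=Bx+v(x)$ with $v=O(|x|^2)$, which is the asserted embedding flow.

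I expect the crux to be the invertibility step: handling $\mathcal L$ when $B$ is not semisimple or has complex eigenvalues, and recognizing that the numerator $\lambda_j-\lambda^k$ vanishes while the denominator $\mu_j-\langle k,\mu\rangle$ does not \emph{precisely} when $B$ is weakly resonant---this is the whole role of the hypothesis. The smooth upgrade is by comparison routine: Borel's theorem together with the (standard, if deep) smooth rigidity of hyperbolic germs sharing an $\infty$-jet.
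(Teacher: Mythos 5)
Your proposal is correct, but it is organized differently from the paper, and the difference is worth recording. The paper first conjugates $F$ to its distinguished normal form $G$ (Lemma \ref{le4}, via Chen's theorem), uses Lemma \ref{le5} to see that any embedding field of $G$ must consist of resonant and weakly resonant monomials (hence, under weak nonresonance, only resonant ones), and then solves the order-by-order equations inside the resonant subspace $\mathcal R^r$, where the operator $T^r(X_r)=\int_0^1 e^{-sB}X_r(e^{sB}y)\,ds$ is triangular with all diagonal entries equal to $1$. You instead skip the normalization entirely and run the same variation-of-constants recursion directly on $F$, observing that your operator $\mathcal L=e^{B}\circ T^r$ is invertible on the \emph{whole} space $\mathcal H_n^r$ because its ``diagonal'' scalars are $\lambda_j$ in the resonant case and $(\lambda_j-\lambda^k)/(\mu_j-\langle k,\mu\rangle)$ otherwise, and the latter vanishes exactly in the weakly resonant case excluded by hypothesis; this is precisely the computation the paper only makes in its closing remark of Subsection \ref{s22} to explain why weak resonance is an obstruction. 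Your route is shorter and makes Lemmas \ref{le4} and \ref{le5} unnecessary for Theorem \ref{th1} (it even gives uniqueness of the formal embedding field for $F$ itself, not just for its normal form); the paper's route keeps the spectral analysis on the smaller resonant subspace and produces the normal-form machinery that is reused later (Lemma \ref{le5} is the engine of Theorem \ref{ex1}). Two steps of yours deserve explicit justification rather than a gesture: (i) ``invertibility of $\mathcal L$ is detected by its semisimplification'' should be proved, either by the lexicographic-ordering triangularity argument the paper uses for $T^r$ (which works verbatim on all of $\mathcal H_n^r$), or by writing $T^r=g(L)$ with $L h=Dh(y)By-Bh(y)$ the operator of Lemma \ref{le3.1} and $g(z)=(e^{z}-1)/z$, then invoking spectral mapping; and (ii) the smooth upgrade needs Chen's theorem in the refined form in which the $C^\infty$ conjugacy realizes the given formal conjugacy (here the identity), so that $H-\mathrm{id}$ is flat and the pushed-forward field really has the form $Bx+v(x)$ with $v=O(|x|^2)$ --- the paper's own ending of Lemma \ref{le6} relies on the same point, so this is a refinement of presentation, not a gap in your argument.
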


 This last result was recently extended to the $C^\infty$
local diffeomorphisms in the Banach spaces \cite{Zh} by using the spectral theory
of linear operators and using some
ideas from \cite{LLZ}. In 2008 we solved the embedding flow
problem for the volume--preserving analytically integrable
diffeomorphisms \cite{Zh1}. For the finite smooth diffeomorphisms some
partial results on this problem were recently
obtained in \cite{Zh2}.

In \cite{LLZ}, for proving Theorem \ref{th1} we first proved that
the given diffeomorphism $F(x)$ can be embedded in a $C^\infty$
periodic vector field (in fact, it is a result mentioned by Arnold in \cite{Ar}
without a proof), then we proved that the periodic vector field
is $C^\infty$ equivalent to an autonomous vector field by using the
extended Floquet's theory for nonlinear periodic differential
vector field, which is also proved in \cite{LLZ}.
The proof was very complicated and involved.
In this paper we shall provide a simple and direct proof to
Theorem \ref{th1}, i.e. without using the embedding periodic vector fields
and so it is not necessary to use the extended Floquet's theory, see Subsection \ref{s22}.

In Theorem \ref{th1} there is an additional condition: weakly nonresonance
on the eigenvalues of the real logarithm
$B$ of $A$. This condition was used in \cite{LLZ} for reducing a $C^\infty$ nonlinear periodic
differential system to an autonomous one. For a long time we do not know if this condition is necessary or not for the
existence of the $C^\infty$ embedding flows of a $C^\infty$ hyperbolic diffeomorphism. Our next result shows that this condition is necessary for some
$C^\infty$ (resp. analytic) diffeomorphism to have a $C^\infty$ (resp. an analytic) embedding flow.

\begin{theorem}\label{ex1}
For $n\ge 3$, there exist locally $C^\infty$ $($resp. analytic$)$ diffeomorphisms of the form $F(x)=Ax+f(x)$ in $(\mathbb R^n,0)$
which have no $C^\infty$ $($resp. analytic$)$ embedding autonomous vector fields, where  $f(x)=O(|x|^2)$ and $A$ has a real logarithm with its eigenvalues weakly resonant.
\end{theorem}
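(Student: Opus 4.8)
The plan is to produce an explicit hyperbolic diagonal $A$ whose \emph{every} real logarithm is weakly resonant, to add a quadratic polynomial perturbation, and to obstruct the existence of an autonomous embedding flow already on $2$-jets, by a single normal-form step. Fix $\rho>0$ with $\rho\ne 1$, put $a=\ln\rho$, and for $n\ge 3$ let
\[
A=\mathrm{diag}\bigl(-\rho,\,-\rho,\,\rho^{2},\,\gamma_{4},\ldots,\gamma_{n}\bigr),\qquad F(x)=Ax+(x_{1}^{2}-x_{2}^{2})\,e_{3},
\]
where $e_{3}$ is the third coordinate vector and $\gamma_{4},\ldots,\gamma_{n}$ are real of modulus $\ne 1$ (the case $n=3$ already contains all of the argument; for $n>3$ the perturbation still lives in the first three variables and the reasoning below is unaffected). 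Then $F$ is analytic, $f(x)=O(|x|^{2})$, and $A$ is hyperbolic and diagonalizable. A real logarithm $B'$ of $A$ commutes with $A$, hence preserves its eigenspaces; on the $(-\rho)$-eigenspace it is a real $2\times2$ matrix with exponential $-\rho I$, so its complexified eigenvalues are $\mu_{1}=a+(2k_{0}+1)\pi\sqrt{-1}$ and $\mu_{2}=\overline{\mu_{1}}$ for some $k_{0}\in\mathbb Z$, while on the $\rho^{2}$-eigenspace it must be the scalar $\mu_{3}=2a$. Since $\mu_{3}-2\mu_{2}=2(2k_{0}+1)\pi\sqrt{-1}$ with $2k_{0}+1\ne0$, every real logarithm of $A$ has weakly resonant eigenvalues; in particular Theorem~\ref{th1} does not apply to this $F$, and it suffices to prove that $F$ has no $C^{\infty}$, equivalently no formal (hence no analytic), autonomous embedding vector field.

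\emph{A conjugation invariant.} For a germ $G(x)=Ax+g(x)$ with $g=O(|x|^{2})$, let $Q_{G}(x_{1},x_{2})$ be the quadratic form obtained by keeping, in the third component of the quadratic part of $g$, only the monomials in $x_{1},x_{2}$. I claim $Q_{G}=Q_{hGh^{-1}}$ for every near-identity germ $h=\mathrm{id}+P$ with $P=O(|x|^{2})$. Only the quadratic part $P_{2}$ of $P$ matters, and the quadratic part of $hGh^{-1}-A$ is $g_{2}+L_{A}(P_{2})$ with $L_{A}(P_{2})(x)=P_{2}(Ax)-AP_{2}(x)$; since $L_{A}$ acts diagonally on vector monomials by $L_{A}(x^{k}e_{j})=(\lambda^{k}-\lambda_{j})x^{k}e_{j}$, and for every $k$ supported on $\{1,2\}$ with $|k|=2$ one has $\lambda^{k}=(-\rho)^{2}=\rho^{2}=\lambda_{3}$, the third component of $L_{A}(P_{2})$ contains no monomial in $x_{1},x_{2}$. (Equivalently: $x_{1}^{2}e_{3},\,x_{1}x_{2}e_{3},\,x_{2}^{2}e_{3}$ are all resonant \emph{for the diffeomorphism} $A$.) Hence $Q_{hFh^{-1}}=Q_{F}=x_{1}^{2}-x_{2}^{2}$ for all such $h$ — an \emph{indefinite} form.

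\emph{The normal-form step and the contradiction.} Assume $\mathcal X=B'x+v(x)$, $v=O(|x|^{2})$, is an autonomous embedding vector field of $F$, so $\phi_{1}^{\mathcal X}=F$; linearizing at $0$ gives $e^{B'}=A$, and $B'$ is semisimple (a logarithm of the semisimple $A$). Solving the homological equation in degree two, pick a polynomial $h=\mathrm{id}+P$, $P$ homogeneous of degree $2$, so that $\mathcal X':=h_{*}\mathcal X$ has linear part $B'$ and quadratic part in $\ker(\mathrm{ad}_{B'})$, i.e. spanned by monomials $x^{k}e_{j}$ with $|k|=2$ and $\langle k,\mu\rangle=\mu_{j}$. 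The only such monomial supported on $\{1,2\}$ is $x_{1}x_{2}e_{3}$: indeed $\mu_{3}=\mu_{1}+\mu_{2}$ holds, whereas $\mu_{3}=2\mu_{1}$ and $\mu_{3}=2\mu_{2}$ fail (by $2(2k_{0}+1)\pi\sqrt{-1}$). Write $B'$ on the $(-\rho)$-eigenspace as $aI+N$ with $N$ semisimple of purely imaginary spectrum; then $e^{tN}$ is bounded and preserves a positive definite quadratic form $p(x_{1},x_{2})$, and a short computation gives $\mathrm{ad}_{B'}(q\,e_{3})=(\nabla q\cdot Nx)\,e_{3}$, so the $x_{1},x_{2}$-quadratics of the third component of $\mathcal X'$ form exactly the line $\mathbb R\,p$; write that component as $c'\,p$, $c'\in\mathbb R$. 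Integrating the degree-two truncation of $\mathcal X'$ along its flow (linear in $x_{1},x_{2}$ up to terms irrelevant for $2$-jets) and using $p(e^{tN}\cdot)=p(\cdot)$, the time-one map satisfies $x_{3}(1)=\rho^{2}x_{3}(0)+c'\rho^{2}\,p(x_{1}(0),x_{2}(0))+(\text{terms involving }x_{4},\ldots,x_{n})+O(|x|^{3})$, whence $Q_{\phi_{1}^{\mathcal X'}}=c'\rho^{2}p$ — a positive or negative \emph{semidefinite} form. But $\phi_{1}^{\mathcal X'}=h\circ F\circ h^{-1}$, so by the previous paragraph $Q_{\phi_{1}^{\mathcal X'}}=Q_{F}=x_{1}^{2}-x_{2}^{2}$. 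A semidefinite form cannot equal an indefinite one, and this contradiction proves that $F$ has no autonomous embedding vector field.

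\emph{Where the difficulty lies.} The heart of the matter is the clash of two resonance counts: $x_{1}^{2}e_{3},x_{1}x_{2}e_{3},x_{2}^{2}e_{3}$ are \emph{all} resonant for the diffeomorphism $A$ (so $Q_{G}$ is a genuine conjugation invariant), whereas only $x_{1}x_{2}e_{3}$ is resonant for the logarithm vector field $B'$ — and this mismatch is precisely what the weak resonance $\mu_{3}-2\mu_{2}\in 2\pi\sqrt{-1}\,(\mathbb Z\setminus\{0\})$ produces. The two points to nail down rigorously are this invariance of $Q_{G}$ and the claim that the normalized field feeds only the rotation-invariant, hence semidefinite, form $p$ into the $x_{3}$-direction; both reduce to short explicit computations once the normalizing polynomial $h$ is in hand. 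One also has to use that \emph{every} real logarithm of $A$ shares the eigenvalue pattern $\{\mu_{1},\mu_{2},2a\}$, so no admissible choice of linear part escapes the conclusion.
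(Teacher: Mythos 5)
Your argument is correct, and it takes a genuinely different route from the paper's. The paper proves Theorem \ref{ex1} with an example whose linear part has spectrum $e^{8},e^{1\pm\frac{\pi}{4}\sqrt{-1}}$ in the Poincar\'e domain: it first passes to the distinguished normal form (Lemma \ref{le4}, resting on Chen's Theorem \ref{le2}, or Theorem \ref{le7} in the analytic case), then uses Lemma \ref{le5} to confine any embedding field to resonant and weakly resonant monomials, and finally integrates the candidate field explicitly to see that the weakly resonant monomials $x_2^{8},x_3^{8}$ cannot survive in a time-one map. You obstruct already at the level of $2$-jets: since $x_1^2e_3$, $x_1x_2e_3$, $x_2^2e_3$ are all resonant for $A=\mathrm{diag}(-\rho,-\rho,\rho^2,\ldots)$, the $(x_1,x_2)$-quadratic form $Q$ in the third component is a tangent-to-identity conjugation invariant, while a single degree-two normalization of any putative embedding field shows that its time-one map can only place a multiple of the $e^{tN}$-invariant (hence definite or zero) form $p$ in that slot; the indefinite $x_1^2-x_2^2$ versus the semidefinite $c'\rho^2 p$ is the contradiction. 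This buys a more elementary and self-contained proof: no normal-form theorem and no infinite induction are needed, the same computation excludes formal, $C^\infty$ and analytic (indeed even finitely smooth) embedding fields at once, and you explicitly dispose of the non-uniqueness of the real logarithm by showing that every real $B'$ with $e^{B'}=A$ has, on the first three coordinates, the eigenvalue pattern $\{a\pm(2k_0+1)\pi\sqrt{-1},\,2a\}$ --- a point the paper's proof passes over silently; what the paper's choice buys is an example with no negative eigenvalues, showing the phenomenon is not tied to them, and in fact your example is essentially the diffeomorphism $F_1=(-2x,-2y,4z+axy+bx^2+cy^2)$ mentioned after Theorem \ref{co1}, for which the paper only asserts that ``the proof follows the same lines.'' Two small points you should nail down in the write-up: for $n>3$ take the $\gamma_j$ positive and distinct from $\rho^2$ (and from $-\rho$), so that every real logarithm is genuinely block diagonal with respect to $\mathrm{span}(e_1,e_2)$ and $\mathrm{span}(e_3)$ and the existence of a real logarithm is not spoiled by an odd number of $-\rho$ blocks; and justify that the component of the normalized quadratic part lying in the subspace $\{q(x_1,x_2)e_3\}$ is itself in $\ker(\mathrm{ad}_{B'})$, which follows because that subspace and its monomial complement are both invariant under $\mathrm{ad}_{B'}$ thanks to the block-diagonal form of $B'$.
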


The proof of Theorem \ref{ex1} will be given in Subsection \ref{s23}. At the end of Subsection \ref{s22}
we shall explain the importance of weakly nonresonance to the existence of embedding flows.

Let $B$ be a real logarithm of $A$ and let $\lambda=(\lambda_1,\ldots,\lambda_n)$ and $\mu=(\mu_1,\ldots,\mu_n)$ be the $n$--tuples
of eigenvalues of $A$ and $B$, respectively. Then $\lambda_j=e^{\mu_j}$ (by a permutation if necessary)
for $j=1,\ldots,n$. Associated with Theorem \ref{ex1} and its proof, we pose the following

\noindent{\bf Conjecture.}
{\it If $f(x)=O(|x|^2)$ is $C^\infty$ $($resp. analytic$)$ and its $j^{th}$ component has a resonant monomial $x^me_j$ with $m$ satisfying
$\mu_j-\langle m,\mu\rangle=2k\pi\sqrt{-1}$ for some $k\in\mathbb Z\setminus\{0\}$, then the locally hyperbolic
diffeomorphism $F(x)=Ax+f(x)$
has no $C^\infty$ $($resp. analytic$)$ embedding flows.
}

Recall that $e_j$ is the $j^{th}$ unit vector with its $j^{th}$ entry being equal to $1$ and the others all vanishing.

Using Theorem \ref{th1} and the following Proposition \ref{po1} we can get the following

\begin{theorem} \label{co1} For a locally hyperbolic $C^\infty$
diffeomorphism ${ F}(x)= A x+f(x)$ with ${f}(x)=O(|x|^2)$ in
$(\mathbb R^n,0)$, the following statements hold.
\begin{itemize}
\item[$(a)$] For $n=2$, $F(x)$ has a $C^\infty$ embedding flow if and only if
either $A$ has no negative eigenvalues, or $A$ has the Jordan normal form $\mbox{\rm diag}(\lambda,\lambda)$ with
$\lambda<0$.
\item[$(b)$] For $n\ge 3$, if $A$ has eigenvalues all positive,
then the diffeomorphism $
F(x)$ admits a $C^\infty$ embedding flow induced by the vector field of
the form $Bx+v(x)$ with $v(x)=O(|x|^2)$.
\end{itemize}
\end{theorem}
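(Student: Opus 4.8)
The plan is to reduce Theorem~\ref{co1} to Theorem~\ref{th1} by way of Proposition~\ref{po1}. I expect Proposition~\ref{po1} to provide two ingredients: (i) the classical criterion that a nonsingular real matrix $A$ admits a real logarithm if and only if, in its real Jordan form, every elementary Jordan block attached to a negative eigenvalue is repeated an even number of times; and (ii) the fact that if $F(x)=Ax+f(x)$ has a $C^{1}$ embedding flow then $A$ must have a real logarithm. For (ii) I would argue that if the flow is generated by a vector field $\mathcal{X}$, its time-$t$ maps $G^{t}$ commute with $F=G^{1}$ and hence carry the fixed-point set of $F$ into itself; since $0$ is an isolated fixed point of the hyperbolic map $F$, this forces $G^{t}(0)=0$ for all small $|t|$, so differentiating at $t=0$ gives $\mathcal{X}(0)=0$ and therefore $A=DF(0)=DG^{1}(0)=e^{D\mathcal{X}(0)}$. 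With (i) and (ii) available, the ``only if'' assertions become a finite case check, and the ``if'' assertions become a matter of exhibiting a real logarithm of $A$ with weakly nonresonant spectrum and then quoting Theorem~\ref{th1}.

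\textbf{Necessity in (a).} If $F$ has a $C^{\infty}$ (hence $C^{1}$) embedding flow, then by (ii) $A$ has a real logarithm. For $n=2$ the nonsingular real Jordan forms are $\operatorname{diag}(\lambda_{1},\lambda_{2})$ with $\lambda_{1},\lambda_{2}$ real and nonzero, a single nontrivial block $\left(\begin{smallmatrix}\lambda&1\\0&\lambda\end{smallmatrix}\right)$, or a rotation--scaling block $\left(\begin{smallmatrix}a&-b\\b&a\end{smallmatrix}\right)$ with $b\neq 0$. In the rotation--scaling case there is no negative eigenvalue. In the single-block case a negative $\lambda$ produces a $2\times2$ block with negative eigenvalue occurring just once, so by (i) no real logarithm exists. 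In the diagonal case, the $1\times1$ block of a negative $\lambda_{i}$ occurs an odd number of times unless $\lambda_{1}=\lambda_{2}<0$, in which case it occurs twice. Hence $A$ has a real logarithm exactly when either $A$ has no negative eigenvalue, or $A=\operatorname{diag}(\lambda,\lambda)$ with $\lambda<0$, which is the stated dichotomy.

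\textbf{Sufficiency in (a), and part (b).} In each admissible configuration I would construct a real logarithm $B$ of $A$ whose eigenvalue pair (or $n$-tuple) $\mu$ is weakly nonresonant, and then invoke Theorem~\ref{th1}. If the eigenvalues of $A$ are real (necessarily positive, since $F$ is a hyperbolic local diffeomorphism), take the real logarithm with real eigenvalues $\mu_{j}=\ln\lambda_{j}$; such a real spectrum is automatically weakly nonresonant, because $\mu_{j}-\langle k,\mu\rangle$ is then real and cannot equal $2l\pi\sqrt{-1}$ with $l\neq 0$. This argument applies verbatim to part (b): for $n\geq 3$ with all $\lambda_{j}>0$ one gets a weakly nonresonant real logarithm $B$, and Theorem~\ref{th1} yields a $C^{\infty}$ embedding flow of $F$ induced by a vector field $Bx+v(x)$ with $v(x)=O(|x|^{2})$. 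For $n=2$ the remaining cases are: complex eigenvalues $re^{\pm i\theta}$, where one takes $B$ with eigenvalues $\mu_{1,2}=\ln r\pm i\theta$; and $A=\operatorname{diag}(\lambda,\lambda)$ with $\lambda<0$, where one takes $B=(\ln|\lambda|)I+\pi J$ with $J=\left(\begin{smallmatrix}0&-1\\1&0\end{smallmatrix}\right)$, so that $e^{B}=|\lambda|\,e^{\pi J}=-|\lambda|I=\lambda I=A$ and $B$ has eigenvalues $\mu_{1,2}=\ln|\lambda|\pm i\pi$. In both of these cases hyperbolicity gives $|\lambda|\neq 1$, i.e. $\mathrm{Re}\,\mu_{j}\neq 0$, so the condition $\mu_{1}-k_{1}\mu_{1}-k_{2}\mu_{2}=(\mathrm{Re}\,\mu_{1})(1-k_{1}-k_{2})+i(\mathrm{Im}\,\mu_{1})(1-k_{1}+k_{2})=2l\pi\sqrt{-1}$ with $l\neq 0$ would force $k_{1}+k_{2}=1$, contradicting $|k|\geq 2$; the same holds for $\mu_{2}$. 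Thus $B$ is weakly nonresonant and Theorem~\ref{th1} applies.

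\textbf{Main obstacle.} Everything above except ingredient (ii) is bookkeeping. The one substantive point is that a $C^{1}$ embedding flow forces the origin to be an equilibrium of the generating vector field, hence $A=e^{D\mathcal{X}(0)}$ to have a real logarithm; I expect this, paired with criterion (i), to be exactly what Proposition~\ref{po1} supplies, and if not, the sketch above (isolatedness of the hyperbolic fixed point together with the commutation $G^{t}\circ F=F\circ G^{t}$) fills the gap. The other thing to watch is the role of hyperbolicity: it is precisely what excludes the dangerous weak resonances $\mu_{j}-\langle k,\mu\rangle\in 2\pi\sqrt{-1}\,\mathbb{Z}\setminus\{0\}$ in the complex-eigenvalue and negative-eigenvalue cases, and hence what lets Theorem~\ref{th1} be applied.
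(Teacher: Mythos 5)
Your proposal is correct and takes essentially the same route as the paper: necessity via the real-logarithm criterion of Proposition \ref{po1} applied to the $2\times 2$ Jordan forms, and sufficiency in (a) together with (b) by exhibiting a real logarithm of $A$ with weakly nonresonant eigenvalues and then invoking Theorem \ref{th1}. The only differences are refinements of points the paper leaves implicit: you justify that an embedding flow fixes the origin and forces $A=e^{D\mathcal X(0)}$ (the paper simply writes the embedding field as $Bx+v(x)$), and you make explicit that hyperbolicity ($\mathrm{Re}\,\mu_j\neq 0$) is what rules out weak resonances in the complex- and negative-eigenvalue cases, which the paper states as obvious.
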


We note that statement $(a)$ solves completely the embedding flow problem for the 
planar locally hyperbolic $C^\infty$ diffeomorphisms.

Statement $(b)$ is in fact Corollary 5 of \cite{LLZ} without a proof there. We list and prove it
here for completeness. Contrary to
statement $(b)$,  if $A$
has negative or conjugate  complex eigenvalues, there are hyperbolic $C^\infty$ diffeomorphisms of the form
$F(x)=Ax+f(x)$ which have no $C^\infty$ embedding flows. For example, the local diffeomorphims in $(\mathbb R^3,0)$
\[
F_1(x,y,z)=(-2x,-2y,4z+axy+bx^2+cy^2),
\]
with its linear part having a real logarithm
\[
B=\left(\begin{array}{ccc}\ln 2 & \pi & 0\\ -\pi & \ln 2 & 0\\ 0 & 0 & 2\ln 2\end{array}
\right),
\]
have neither $C^\infty$ nor analytic embedding autonomous vector field provided that $a^2+b^2+c^2\ne 0$ and $b\ne c$. Its
proof follows from the same lines as that of Theorem \ref{ex1}, the details are omitted. We can check easily that in this last
example the eigenvalues of $B$ are weakly resonant.
From the proof of Theorem \ref{ex1} we can subtract an example of $C^\infty$ local diffeomorphisms with their
linear parts having conjugate complex eigenvalues and having a real logarithm which has no embedding
autonomous vector fields.

Finally we make a remark on the main result of \cite{Zh2}.
According to Theorem \ref{ex1} and its proof we should add the weakly nonresonant condition to
Theorem 1.1 of \cite{Zh2} for ensuring the
existence of embedding flows. We restate it here with the additional condition on weakly nonresonance.

\begin{theorem} \label{th1.1} Assume that ${\bf F}(x)=\mathbf A x+\mathbf f(x)$
with ${\bf f}(x)$ a resonant vector--valued polynomial without
linear part and that ${\bf A}$ is hyperbolic and has a real
logarithm ${\bf B}$ with its eigenvalues weakly nonresonant. If the eigenvalues $\mathbf \la$ of $\mathbf
A$ satisfy finite resonant conditions, then ${\bf F}(x)$ has a unique
embedding vector field. Furthermore, the embedding  vector field is of the form ${\mathcal X(x)}={\bf B}x+{\bf v}(x)$ with
${\bf v}(x)$ a resonant polynomial vector field.
\end{theorem}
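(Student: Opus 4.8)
The plan is to construct the embedding vector field directly in the form $\mathcal X(x)=Bx+v(x)$ with $v$ a resonant polynomial vector field, exploiting that a resonant $v$ commutes with its linear part.

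\textbf{Step 1 (resonances of $\mathbf A$ versus $\mathbf B$).} Since $\lambda_j=e^{\mu_j}$, a monomial $x^m e_j$ with $|m|\ge2$ satisfies $\mu_j=\langle m,\mu\rangle$ if and only if $\mu_j-\langle m,\mu\rangle\in 2\pi\sqrt{-1}\,\Z$, and by the weakly nonresonant hypothesis this forces $\mu_j-\langle m,\mu\rangle=0$; equivalently $\lambda_j=\lambda^m$, i.e. $x^m e_j$ is resonant for $A$. Hence the space $\mathcal R$ of resonant polynomial vector fields without linear part is the same whether taken relative to $A$ or to $B$. Because $\lambda$ satisfies only finitely many resonant conditions, $\mathcal R$ is finite dimensional; because the Lie bracket of monomials of degrees $d_1,d_2\ge2$ has degree $d_1+d_2-1$, iterated brackets raise the degree and $\mathcal R$ is a nilpotent Lie algebra; and since $\mathcal R=\ker(\operatorname{ad}_{Bx})$ on polynomials of degree $\ge2$, every $v\in\mathcal R$ commutes with $Bx$.

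\textbf{Step 2 (existence).} For $v\in\mathcal R$ the flows of $Bx$ and of $v$ commute, so the time-one map of $\mathcal X=Bx+v$ equals $e^{B}\circ\varphi^{1}_{v}=A\circ\varphi^{1}_{v}$, where the time-one map $\varphi^{1}_{v}$ of $v$ is a polynomial of the form $\operatorname{id}+w$ with $w\in\mathcal R$ (the flow of a resonant field stays resonant, and terminates because $\mathcal R$ is nilpotent). Thus $\varphi^{1}_{\mathcal X}=F$ is equivalent to $\varphi^{1}_{v}=\operatorname{id}+A^{-1}f$. Now $A^{-1}f\in\mathcal R$ because the resonant subspace is invariant under $w\mapsto A^{-1}w$ (the eigenvalues of $A$ are unchanged), and the maps $\{\operatorname{id}+w:w\in\mathcal R\}$ form a unipotent group $N$ — closed under composition and inversion since resonance is preserved under both and the polynomial degrees stay bounded — whose Lie algebra is $\mathcal R$; as $\mathcal R$ is nilpotent, $v\mapsto\varphi^{1}_{v}$ is a bijection of $\mathcal R$ onto $N$. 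Therefore $v:=\log(\operatorname{id}+A^{-1}f)\in\mathcal R$ is the unique element of $\mathcal R$ with $\varphi^{1}_{v}=\operatorname{id}+A^{-1}f$, and $\mathcal X=Bx+v$ is an embedding vector field of the announced form with $v$ a resonant polynomial.

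\textbf{Step 3 (uniqueness).} Any embedding vector field $\mathcal Y$ of $F$ has linear part $D\mathcal Y(0)$ a logarithm of $A$, hence equal to $B$ under the standing choice; writing $\mathcal Y=Bx+w$ and expanding $\varphi^{1}_{\mathcal Y}=F$ degree by degree, the degree-$d$ part reads $\mathcal L_{d}(w_{d})=f_{d}+(\text{terms in }w_{2},\dots,w_{d-1})$, where $\mathcal L_{d}\colon P\mapsto\int_{0}^{1}e^{(1-s)B}P(e^{sB}x)\,ds$ acts on degree-$d$ vector-valued polynomials with eigenvalue on $x^m e_j$ equal to $e^{\mu_j}\bigl(e^{\langle m,\mu\rangle-\mu_j}-1\bigr)/(\langle m,\mu\rangle-\mu_j)$ (read as $e^{\mu_j}$ when $\langle m,\mu\rangle=\mu_j$), which never vanishes precisely by weak nonresonance. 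So $\mathcal L_{d}$ is invertible, $w_{2},w_{3},\dots$ are determined recursively and uniquely, and $\mathcal Y=\mathcal X$; this recursion also reproves existence and, since $\mathcal L_{d}$ and the cross terms preserve the resonant subspace while $f$ is resonant, gives inductively $w_{d}\in\mathcal R_{d}$, so $w$ is a polynomial. The main obstacle is the bookkeeping that makes these pieces fit: verifying that $\{\operatorname{id}+w:w\in\mathcal R\}$ is a nilpotent group on which $v\mapsto\varphi^{1}_{v}$ is onto, checking in the non-semisimple case that $\mathcal L_{d}$ genuinely preserves $\mathcal R_{d}$ and is invertible, and confirming that ``finite resonant conditions'' really forces $v$ to be a polynomial rather than a formal series.
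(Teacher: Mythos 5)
Your Step 3 is, in substance, the paper's own argument: it is the recursion of Lemma \ref{le6}, where the operator $X\mapsto\int_0^1e^{-sB}X(e^{sB}y)\,ds$ is shown to be triangular in the lexicographic basis with diagonal entries nonzero exactly because of weak nonresonance, and where the inhomogeneous terms are shown to stay resonant so that, by the finiteness of the resonant relations, the solution is a polynomial. The genuine problem is in Steps 1--2. The identification $\mathcal R=\ker(\operatorname{ad}_{Bx})$ is false unless $B$ is semisimple, and the theorem does not assume semisimplicity. Concretely, take $\lambda=(2,4,4)$ with $B=\mbox{diag}(\ln 2,J)$, $J$ the lower triangular Jordan block with eigenvalue $\ln 4$: the hypotheses of the theorem hold (hyperbolic, real eigenvalues of $B$ so weakly nonresonant, only the finitely many resonances $\lambda_2=\lambda_1^2$, $\lambda_3=\lambda_1^2$), the monomial $v=x_1^2e_2$ is resonant, yet $Dv(x)Bx-Bv(x)=-x_1^2e_3\neq0$. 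So a resonant $v$ need not commute with $Bx$, the flows of $Bx$ and $v$ need not commute, and the reduction of $\varphi^1_{\mathcal X}=F$ to $\varphi^1_v=\operatorname{id}+A^{-1}f$ collapses. This is precisely why the paper's appendix proves $Dg(y)By-Bg(y)\equiv0$ only when $B$ is diagonal, and in general (proof of Lemma \ref{le5}) only that this expression remains resonant. Your Step 2 existence proof is therefore invalid in the non-semisimple case (it could be salvaged as a slicker proof under the extra assumption that $A$ is diagonalizable).

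That leaves Step 3 to carry both existence and uniqueness, and there the items you defer as ``bookkeeping'' are not side remarks but the actual content of the proof: (i) invertibility of $\mathcal L_d$ for non-semisimple $B$ --- your eigenvalue formula on $x^me_j$ is only the diagonal of a triangular matrix, and the triangularity (via $B=S+N$, $e^{sB}=e^{sS}e^{sN}$ and the lexicographic ordering, as in Lemma \ref{le6}) must be checked; (ii) that $\mathcal L_d$ maps $\mathcal R_d$ bijectively onto $\mathcal R_d$ and that the cross terms $P_d$ built from resonant $w_2,\dots,w_{d-1}$ are again resonant, which is what forces $w_d\in\mathcal R_d$ and, with finitely many resonant monomials, $w_d=0$ for large $d$. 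Once these are supplied, your Step 3 reproduces the paper's route and the theorem follows. A final caution on uniqueness: your recursion gives uniqueness of the Taylor series among embedding vector fields whose linear part is the chosen logarithm $B$; since $A$ generally has several real logarithms, this (which is also all the paper's argument yields) should be stated as such rather than as absolute uniqueness.
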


Having the additional weakly nonresonant condition, the proof of Theorem \ref{th1.1} follows from the same line as that of
Theorem 1.1 of \cite{Zh2}. In fact, the proof of Theorem \ref{th1.1} can also be completed by using
that of Lemma \ref{le6}.

At the end of this section we pose the following

\noindent{\bf Open problem.}
{\it What kinds of $C^\infty$ non--hyperbolic
local diffeomorphisms can be embedded in a $C^\infty$ flow? }

We should mention that this last open problem was solved recently in
\cite{Zh3} for smooth and analytic integrable diffeomorphisms.
In non--integrable case there are no any results except those local
diffeomorphisms which can be linearized by a $C^\infty$ near identity transformation.

This paper is organized as follows. In the next section we shall prove our main results. Subsection
\ref{s21} is the preparation to the proof of the main results,
which contains some necessary known results. Subsection \ref{s22}
is the new and direct approach to the proof of Theorem \ref{th1}. Subsections
\ref{s23} and \ref{s24} are the proofs of  Theorem \ref{ex1} and of Theorem \ref{co1}, respectively.

\section{Proof of the main results}\label{s2}

\setcounter{section}{2}
\setcounter{equation}{0}\setcounter{theorem}{0}

\subsection{Preparation to the proof}\label{s21}

In this subsection we present some known results, which will be
used in the proof of our main results.

The first one gives the relation between the embedding vector
fields of two conjugate diffeomorphisms (see e.g. \cite{LLZ}). Its
proof is easy.

\begin{lemma}\label{le1} Let ${ G}$ and ${ H}$ be two conjugate
diffeomorphisms on a manifold, i.e. there exists a diffeomorphism
${J}$ such that ${J}\circ { G}={ H}\circ { J}$. If ${\mathcal X}$
is an embedding vector field of ${ G}$, then ${ J}_*{\mathcal X}$
is an embedding vector field of ${H}$, where ${ J}_*$ denotes the
tangent map induced by the conjugation ${J}$.
\end{lemma}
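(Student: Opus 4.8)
The plan is to use the naturality of flows under diffeomorphisms. Let $\{\phi_t\}$ denote the (local) flow generated by ${\mathcal X}$, so that by hypothesis $G=\phi_1$, the time one map. The first step is to recall the elementary fact that pushing a vector field forward by a diffeomorphism pushes its flow forward in the same way, namely that the flow of $J_*{\mathcal X}$ is precisely $\{J\circ\phi_t\circ J^{-1}\}$. I would justify this in one line by uniqueness of solutions of ODEs: the family $t\mapsto J\circ\phi_t\circ J^{-1}$ is a one parameter (local) group of diffeomorphisms with $J\circ\phi_0\circ J^{-1}=\mathrm{id}$, and differentiating $t\mapsto J(\phi_t(J^{-1}(p)))$ at $t=0$ via the chain rule, using $\frac{d}{dt}\phi_t=\mathcal X\circ\phi_t$, shows its infinitesimal generator is exactly $J_*{\mathcal X}$.

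Granting this identification, the second step is immediate: the time one map of the flow of $J_*{\mathcal X}$ is $J\circ\phi_1\circ J^{-1}=J\circ G\circ J^{-1}$, and the conjugacy relation $J\circ G=H\circ J$ gives $J\circ G\circ J^{-1}=H$. Hence the time one map of the flow of $J_*{\mathcal X}$ is $H$, which is exactly the assertion that $J_*{\mathcal X}$ is an embedding vector field of $H$.

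Since the objects here are really germs at the origin, the only bookkeeping point is to observe that all of this can be carried out on a common domain: $\phi_t$ is defined for $|t|\le 1$ on a sufficiently small neighborhood $U$ of the fixed point, and then $J(U)$ is a neighborhood of the corresponding fixed point of $H$ on which $J\circ\phi_t\circ J^{-1}$ is defined for $|t|\le 1$. I do not expect any genuine obstacle in this lemma; the proof is essentially a two line computation once the flow naturality identity is invoked, and the statement is included only because it will be used repeatedly to transfer the embedding problem between conjugate diffeomorphisms (in particular between $F$ and its normal form).
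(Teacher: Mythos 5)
Your proof is correct and is exactly the standard argument the paper has in mind (the paper itself omits the proof, noting only that it is easy and citing \cite{LLZ}): flow naturality $J\circ\phi_t\circ J^{-1}$ being the flow of $J_*\mathcal X$, plus the conjugacy $J\circ G\circ J^{-1}=H$ applied at $t=1$. No issues; the domain bookkeeping remark is fine as stated.
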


This lemma shows that in order for solving the embedding flow problem for a
given diffeomorphism, one way is to find a conjugate
diffeomorphism which has an embedding flow.

The next result, due to Chen \cite{Ch}, characterizes the
equivalence between $C^\infty$ conjugacy and formal conjugacy,
which is the key point to prove our main result.

\begin{theorem}\label{le2}
If two $C^\infty$ diffeomorphisms are formally conjugate at a
hyperbolic fixed point, then they are $C^\infty$ conjugate in a
neighborhood of the fixed point.
\end{theorem}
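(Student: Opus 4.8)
The plan is to first invoke Borel's theorem to reduce to the case in which $G$ and $H$ have the same infinite jet at the hyperbolic fixed point, and then to construct the conjugacy by a deformation (homotopy) argument, realizing it as the time--one map of the flow of a suitable non--autonomous vector field.

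I would normalize the fixed point to be $0$. Let $\widehat G,\widehat H$ be the Taylor series of $G,H$ at $0$; by hypothesis there is a formal map $\widehat J$ with invertible linear part such that $\widehat J\circ\widehat G=\widehat H\circ\widehat J$. By Borel's theorem pick a $C^\infty$ local diffeomorphism $J$ whose $\infty$--jet at $0$ equals $\widehat J$. Since conjugacy is a transitive relation, after replacing $G$ by $J\circ G\circ J^{-1}$ I may assume that $G$ and $H$ have the same $\infty$--jet at $0$; then $g:=G-H$ is flat at $0$ (it and all its partial derivatives vanish there) and $G,H$ share the hyperbolic linear part $A=DH(0)$.

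Next I would deform: set $F_t:=H+t\,g$ for $t\in[0,1]$, so that $F_0=H$ and $F_1=G$. As $Dg$ is flat, $DF_t$ is invertible near $0$, hence each $F_t$ is a local $C^\infty$ diffeomorphism with hyperbolic fixed point $0$ and linear part $A$, and $F_t$ is affine, so $C^\infty$, in $t$. I look for a $C^\infty$ family $\Phi_t$ of local diffeomorphisms, each infinitely tangent to the identity at $0$, with $\Phi_0=\mathrm{id}$ and $\Phi_t\circ H=F_t\circ\Phi_t$; then, unwinding the replacement, $\Phi_1$ together with $J$ yields the desired $C^\infty$ conjugacy of $H$ and $G$. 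Writing $\partial_t\Phi_t=Y_t\circ\Phi_t$ for a non--autonomous vector field $Y_t$ and differentiating the conjugacy relation in $t$, one checks that this holds exactly when $Y_t$ solves the cohomological equation
\[
Y_t\circ F_t-(DF_t)\,Y_t=g,\qquad t\in[0,1],
\]
with $Y_t$ of class $C^\infty$ in $(x,t)$ and flat at $0$; conversely, given such $Y_t$, integrating $\partial_t\Phi_t=Y_t\circ\Phi_t$ produces the family, and the flow is defined on a neighborhood of $0$ independent of $t\in[0,1]$ since $Y_t$ vanishes at $0$.

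The heart of the matter, which I expect to be the main obstacle, is solving this cohomological equation using the hyperbolicity of $A$. When $A$ is a contraction there is a unique flat solution, given explicitly by the orbit sum $Y_t(x)=-\sum_{j\ge0}\big(D(F_t^{\,j+1})(x)\big)^{-1}g\big(F_t^{\,j}(x)\big)$: this converges in $C^\infty$ and is flat at $0$ because $g$ is flat, so $g(F_t^{\,j}(x))$ tends to $0$ faster than any geometric rate while the factors $(D(F_t^{\,j+1})(x))^{-1}$ grow only geometrically, and it is $C^\infty$ in $t$ because $F_t$ is. When $A$ is an expansion one argues symmetrically, using $F_t^{-1}$ and backward orbits. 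The genuinely delicate case is the mixed one: there one works near the stable and unstable manifolds of $F_t$ separately, solving by forward resp.\ backward orbit sums, and then patches these together over a full neighborhood using the local product structure at the hyperbolic fixed point, with $C^\infty$ estimates uniform in $t$ --- the Grobman--Hartman/Sternberg--type analysis --- the flatness of $g$ being precisely what prevents any finite-order resonance obstruction from surviving. (Alternatively, one may simply cite Chen \cite{Ch}, where exactly this statement is established.)
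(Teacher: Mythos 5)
The paper itself gives no proof of this statement: it is quoted as Chen's theorem (cited as \cite{Ch}) and used as a black box, so your parenthetical fallback (``simply cite Chen'') is exactly what the paper does. Judged as an actual proof, your sketch is correct and standard in its first half: the Borel-theorem reduction to two diffeomorphisms with the same $\infty$--jet, the homotopy $F_t=H+tg$, the derivation of the cohomological equation $Y_t\circ F_t-(DF_t)\,Y_t=g$, and the orbit--sum solution in the purely contracting (or, symmetrically, expanding) case all check out, including the flatness and joint smoothness in $(x,t)$.

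The genuine gap is the mixed hyperbolic (saddle) case, which is precisely where the theorem lives. There the forward sum $\sum_{j\ge0}\bigl(D(F_t^{\,j+1})(x)\bigr)^{-1}g\bigl(F_t^{\,j}(x)\bigr)$ is not even well defined on a full neighborhood: forward orbits of points off the local stable set leave the domain of the germ, and after any globalization (cutting off $g$, extending $F_t$ by its linear part) the terms need not decay with the required $C^\infty$ control, because $g$ is flat only at the origin, not along the unstable directions, while the factors $\bigl(D(F_t^{\,j+1})\bigr)^{-1}$ grow at the unstable rate. The sentence about patching forward and backward solutions ``using the local product structure \dots with $C^\infty$ estimates uniform in $t$'' is not a proof step but a restatement of the whole Sternberg--Chen analysis (splitting $g$ into parts solved by forward sums along the stable directions and backward sums along the unstable ones, or first flattening the data along the invariant manifolds, together with the uniform estimates that make the resulting $Y_t$ smooth and flat). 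As written, your argument establishes the theorem only when the fixed point is a contraction or an expansion; for the general hyperbolic case you must either carry out that analysis or, as the paper does, invoke Chen \cite{Ch}.
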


The following one, called the Poincar\'e--Dulac theorem, provides the existence of the 
analytic normal form for an analytic diffeomorphism (see for instance
\cite{AA,Li}), which will also be used in the proof of our main results.
\begin{theorem}\label{le7}
Any local analytic diffeomorphism $F(x)=Ax+f(x)$ in $(\mathbb C^n,0)$, with $f(x)=o(x)$ and
the eigenvalues of $A$ belonging to the Poincar\'e domain, is analytically conjugate to
its normal form.
\end{theorem}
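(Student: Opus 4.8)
\noindent\textbf{Proof proposal for Theorem \ref{le7}.}
This is the classical Poincar\'e--Dulac theorem for maps, and my plan is to split it into a purely formal normalization step and a convergence step, the latter being where membership of $\la=(\la_1,\ldots,\la_n)$ in the Poincar\'e domain is essential. Recall that for a diffeomorphism the Poincar\'e domain means that all $|\la_j|$ are $<1$ or all are $>1$. Since replacing $F$ by $F^{-1}$ swaps these two cases and replaces the formal normal form by its formal inverse (again a polynomial supported on resonant monomials), I would assume from the start that $|\la_j|<1$ for all $j$; after a linear change I would also take $A$ in Jordan form, so upper triangular with the $\la_j$ on the diagonal, and for the description below it does no harm to picture $A$ diagonal.

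First I would construct the formal normalizing transformation. Looking for $x=H(y)=y+P(y)$ with $P=O(|y|^2)$ conjugating $F=A+f$ to $N=A+g$, the identity $F\circ H=H\circ N$, read off in each degree $d\ge2$, becomes a homological equation
\[
L_A P_d=g_d+R_d,\qquad L_A Q:=AQ(y)-Q(Ay),
\]
where $R_d$ is an explicit polynomial in the Taylor coefficients of $f$ and of the already-determined $P_e,g_e$ with $e<d$. On the monomial basis $y^ke_j$, $|k|=d$, the operator $L_A$ is diagonal (upper triangular in general) with eigenvalues $\la_j-\la^k$; so at each step I would kill every non-resonant monomial by solving for $P_d$ on the range of $L_A$, and be forced to place the resonant monomials $y^ke_j$ with $\la_j=\la^k$ into $g_d$. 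This determines $H$ and $N$ formally and uniquely once $N$ is required to carry only resonant monomials. Because $|\la^k|\le(\max_i|\la_i|)^{|k|}\to0$ while the $|\la_j|$ are bounded away from $0$, there are only finitely many resonances, and hence $N$ is a \emph{polynomial}.

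Next I would prove that $H$ converges near $0$ by the method of majorants. The two gifts of the Poincar\'e domain are: (i) $N$ is a polynomial, so its coefficients are trivially controlled; and (ii) a uniform bound $|\la_j-\la^k|\ge\delta>0$ over all non-resonant $(j,k)$ --- indeed $|\la_j-\la^k|\ge|\la_j|-(\max_i|\la_i|)^{|k|}$ is bounded below once $|k|$ is large, and only finitely many small $|k|$ remain, none of them resonant. From the homological equation, (ii) gives $\|P_d\|\le\delta^{-1}\|R_d\|$; since $R_d$ is assembled from $f$, from the lower-order $P_e$ entering through compositions whose inner arguments are contracted by $A$ (whose norm can be taken $<1$ in a suitable basis), and from the finitely many coefficients of $g$, I would introduce a scalar majorant $\Phi(\sigma)=C\sigma^2/(1-\sigma/\rho)$ of $f$ in $\sigma=y_1+\cdots+y_n$ and show that the coefficient-wise majorant $\Psi$ of $P$ is dominated by the analytic solution of a functional equation of the shape $\Psi=\delta^{-1}\bigl(\Phi(\sigma+\Psi)+(\text{polynomial in }\sigma,\Psi)\bigr)$, which by the analytic implicit function theorem vanishes at $0$ and has positive radius of convergence. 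Hence $H$ is analytic near $0$, and being near-identity it is a local analytic diffeomorphism conjugating $F$ to the polynomial normal form $N$.

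The main obstacle is exactly the convergence step: arranging the majorant bookkeeping so that the compositions hidden in $R_d$ and the division by the small divisors $\la_j-\la^k$ are controlled at the same time. This is precisely where the Poincar\'e-domain hypothesis cannot be dropped --- outside it the divisors need not be bounded away from $0$ and $N$ need not be polynomial, so the majorant equation degenerates. The formal step, by contrast, is routine linear algebra carried out degree by degree. For the detailed estimates I would follow \cite{AA,Li}.
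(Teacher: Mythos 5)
The paper does not prove Theorem \ref{le7} at all: it is quoted as the classical Poincar\'e--Dulac theorem with a pointer to \cite{AA,Li}, so there is no in-paper argument to compare against. Your sketch (formal normalization via the homological equation $AQ(y)-Q(Ay)$, finiteness of resonances and the uniform lower bound on $|\la_j-\la^k|$ in the Poincar\'e domain, then convergence by majorants) is precisely the standard proof found in those references and is correct, the only glossed point being the inversion of the merely triangular $L_A$ when $A$ is not semisimple, which is handled by the usual $\varepsilon$-scaling of the Jordan form and does not affect the argument.
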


Recall that the eigenvalues of $A$ belong to the {\it Poinar\'e domain}
if their modulus are either all larger than $1$ or all less than $1$.

The following result presents the spectrum of a class of linear
operators associated with maps (for a proof, see e.g. \cite{Zh2})

\begin{lemma}\label{le3} Let $M_n(\mathbb F)$ $(\mathbb F=\mathbb C$ or $\mathbb R)$ be the set of square
matrices of order $n$ with its entries being in the field $\mathbb F$, and
let ${\cal H}_n^r(\mathbb F)$ be the linear space formed by 
$n$--dimensional vector--valued homogeneous polynomials of degree
$r$ in $n$ variables with their coefficients in $\mathbb F$. If ${A}\in
M_n(\mathbb F)$ is a lower triangular Jordan normal form matrix
with the eigenvalues $\la_1,\ldots,\la_n$, then the linear operator in
${\bf\cal H}_n^r(\mathbb F)$ defined by
\begin{equation}\label{e1}
\mathcal L{h}(x)={ Ah}(x)-{h}({ A}x),\qquad { h}\in {\cal
H}_n^r(\mathbb F),
\end{equation}
has the spectrum
\[
\left\{\la_j-\prod\limits_{i=1}\limits^{n}\la_i^{m_i};\,
m=(m_1,\ldots,m_n)\in{\mathbb Z}_+^n,\,|m|=r,\, j=1,\ldots,n\right
\},
\]
where $\mathbb Z_+$ denotes the set of nonnegative integers and
$|m|=m_1+\ldots+m_n$.
\end{lemma}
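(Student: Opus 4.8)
The plan is to compute the matrix of $\mathcal L$ in the standard monomial basis and show it is triangular with the asserted numbers on the diagonal; since the spectrum of a triangular matrix is the set of its diagonal entries, this gives the lemma. For $\mathbb F=\mathbb R$ all $\lambda_i$ are real (the eigenvalues of a lower triangular matrix are its diagonal entries) and $\mathcal L$ has a real matrix, so we may read off its spectrum after complexifying to $\mathcal H_n^r(\mathbb C)$; thus we assume $\mathbb F=\mathbb C$. A basis of $\mathcal H_n^r(\mathbb C)$ is $\{x^m e_j:\ m\in\mathbb Z_+^n,\ |m|=r,\ 1\le j\le n\}$, with $x^m=x_1^{m_1}\cdots x_n^{m_n}$.

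Write $A=\Lambda+N$ with $\Lambda=\mbox{diag}(\lambda_1,\ldots,\lambda_n)$ and $N$ the subdiagonal nilpotent part, so $Ae_j=\lambda_je_j+\varepsilon_{j+1}e_{j+1}$ and $(Ax)_i=\lambda_ix_i+\varepsilon_ix_{i-1}$ with $\varepsilon_{j+1},\varepsilon_i\in\{0,1\}$ (and $x_0:=0$). I would compute the two pieces of $\mathcal Lh(x)=Ah(x)-h(Ax)$ on $x^me_j$. First, $A(x^me_j)=x^m(Ae_j)=\lambda_jx^me_j+\varepsilon_{j+1}x^me_{j+1}$, which keeps the monomial $x^m$ and only raises the slot index $j$. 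Second, $h(Ax)=(Ax)^me_j$ and $(Ax)^m=\prod_i(\lambda_ix_i+\varepsilon_ix_{i-1})^{m_i}=\left(\prod_i\lambda_i^{m_i}\right)x^m+\sum_{m'}c_{m'}x^{m'}$, where every monomial $x^{m'}$ in the remainder is obtained from $x^m$ by transferring a positive amount of exponent from some coordinate $i$ to coordinate $i-1$; hence, setting $\sigma(m):=\sum_i i\,m_i$, one has $\sigma(m')<\sigma(m)$ for each such $m'$. Therefore
\[
\mathcal L(x^me_j)=\left(\lambda_j-\prod_i\lambda_i^{m_i}\right)x^me_j+\varepsilon_{j+1}x^me_{j+1}-\sum_{\sigma(m')<\sigma(m)}c_{m'}\,x^{m'}e_j .
\]

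Now order the basis by the weight $\psi(m,j)=\sigma(m)-j$, decreasingly, refined arbitrarily to a total order. The term $x^me_{j+1}$ has weight $\psi(m,j)-1$, and each term $x^{m'}e_j$ appearing above has weight $\sigma(m')-j<\psi(m,j)$; thus every off-diagonal contribution of $\mathcal L(x^me_j)$ lands on a basis vector of strictly smaller weight, so $\mathcal L$ is triangular in this ordered basis, with diagonal entries $\lambda_j-\prod_i\lambda_i^{m_i}$ over all $|m|=r$ and $1\le j\le n$. This is exactly the claimed spectrum. The one point that requires care — and the only genuine obstacle — is the choice of this weight: the off-diagonal part coming from $N$ in $Ae_j$ raises $j$, while the off-diagonal part coming from $N$ inside $h(Ax)$ lowers $\sigma(m)$, so the two perturbations appear to push in opposite directions; it is the combined quantity $\sigma(m)-j$ that makes both strictly decreasing and delivers genuine triangularity rather than merely a block structure. (As a sanity check one may note that $h\mapsto Ah(x)$ and $h\mapsto h(Ax)$ commute, with eigenvalue sets $\{\lambda_j\}$ and $\{\prod_i\lambda_i^{m_i}\}$, so a priori the spectrum of $\mathcal L$ can only consist of differences of such numbers; the triangulation pins down which differences actually occur.)
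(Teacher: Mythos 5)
Your proof is correct. Note that the paper itself does not prove this lemma at all: it quotes it as a known result, with a pointer to reference \cite{Zh2}, so there is no internal argument to compare against; your write-up supplies a complete, self-contained proof of the standard kind. The computation of $\mathcal L$ on the monomial basis is right ($A(x^me_j)=\lambda_jx^me_j+\varepsilon_{j+1}x^me_{j+1}$, and $(Ax)^m=\lambda^mx^m$ plus monomials obtained by shifting exponent from a coordinate $i$ to $i-1$, hence with strictly smaller $\sigma(m')=\sum_i i\,m_i'$), and the key observation that the single weight $\psi(m,j)=\sigma(m)-j$ strictly decreases under both kinds of off-diagonal contribution is exactly what is needed to get genuine triangularity rather than only a block structure; the real case is indeed harmless since the diagonal entries $\lambda_j-\lambda^m$ are real when $\mathbb F=\mathbb R$. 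This is essentially the same triangularization-in-a-suitable-ordering device the author uses elsewhere in the paper (e.g.\ in the proof of Lemma \ref{le6}, where the operator $T^r$ is shown to be lower triangular with respect to a lexicographic-type ordering), so your route is in the spirit of the source even though the source omits the proof.
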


The next result, due to Bibikov \cite{Bi}, provides the spectrum
of a class of linear operators associated with the vector fields.

\begin{lemma}\label{le3.1} For ${B}\in M_n(\mathbb F)$, we define a linear operator $L$ in ${\mathcal
H}_n^r(\mathbb F)$ by
\[
L{h}(x)=D{h}(x){B}x-{Bh}(x),\qquad {h}\in {\mathcal H}_n^r(\mathbb
F).
\]
If $\mu_1,\ldots,\mu_n$ are the eigenvalues of $B$, then the
spectrum of $L$ is
\[
\left\{\sum\limits_{i=1}\limits^{n}m_i\mu_i-\mu_j;\,
m=(m_1,\ldots,m_n)\in{\mathbb Z}_+^n,\,|m| =r,\,
j=1,\ldots,n\right \}.
\]
\end{lemma}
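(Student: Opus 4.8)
The plan is to mirror the proof of Lemma~\ref{le3}: normalize $B$, and then exhibit a basis of $\mathcal{H}_n^r(\mathbb{F})$ in which $L$ is represented by a triangular matrix carrying the announced numbers on its diagonal.

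First I would reduce to the case in which $B$ is in, say, lower triangular Jordan normal form. Complexifying if $\mathbb{F}=\mathbb{R}$ (so that $B$ has a full set of eigenvalues and is triangularizable over $\mathbb{C}$, while the spectrum of the real operator is unchanged), and conjugating $B$ by an invertible $P$ with $\widetilde{B}=P^{-1}BP$ in Jordan form, one checks that $\Phi\colon \mathcal{H}_n^r\to\mathcal{H}_n^r$, $\Phi h(x)=P^{-1}h(Px)$, is a linear isomorphism with $\Phi\circ L=\widetilde{L}\circ\Phi$, where $\widetilde{L}h(x)=Dh(x)\widetilde{B}x-\widetilde{B}h(x)$; this is immediate from $D(\Phi h)(x)=P^{-1}Dh(Px)P$ and $P\widetilde{B}x=B(Px)$. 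Hence $L$ and $\widetilde{L}$ have the same spectrum, and we may assume $B=\Lambda+N$ with $\Lambda=\mathrm{diag}(\mu_1,\ldots,\mu_n)$ and $N$ strictly lower triangular (hence nilpotent).

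Next I would compute $L(x^m e_j)$ on the monomial basis $\{x^m e_j:\ |m|=r,\ 1\le j\le n\}$. The diagonal part contributes exactly $\bigl(\sum_i m_i\mu_i-\mu_j\bigr)x^m e_j$, since $D(x^m e_j)\Lambda x=\bigl(\sum_i m_i\mu_i\bigr)x^m e_j$ and $\Lambda(x^m e_j)=\mu_j x^m e_j$. The nilpotent part produces, from $Dh(x)Nx$, monomials $x^{m'}e_j$ with $m'$ obtained from $m$ by transferring one unit of exponent to a strictly smaller index, and, from $Nh(x)$, terms $x^m e_i$ with $i>j$. Equipping the basis with the weight $w(x^m e_j)=\sum_i i\,m_i$ together with the tie-breaking rule that, for a fixed monomial, a larger component index counts as smaller, one verifies that every one of these extra terms is strictly below $x^m e_j$. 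Thus in any total order refining this partial order the matrix of $L$ is triangular with diagonal entries $\sum_i m_i\mu_i-\mu_j$, and reading off the diagonal yields the asserted spectrum.

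The step that needs care — the main obstacle — is this last one: the weight and the tie-break must be chosen so that the two families of $N$-contributions (those that alter the exponent and those that alter the component index) move strictly down \emph{simultaneously}, and here it is essential that $B$ is triangular, so that $Nx$ involves only strictly smaller-index variables and $Ne_j$ only strictly larger-index unit vectors. An alternative that avoids the explicit ordering is to write $L=L_\Lambda+L_N$ with $L_\Lambda h=Dh\,\Lambda x-\Lambda h$ and $L_N h=Dh\,Nx-Nh$, to note that $L_\Lambda$ is semisimple with the stated eigenvalues (by the diagonal computation above), that $[\,L_\Lambda,L_N\,]=0$ because $\Lambda N=N\Lambda$ and $h\mapsto Dh\,Ax-Ah$ is, up to sign, $\operatorname{ad}_{Ax}$ on the Lie algebra of polynomial vector fields (Jacobi identity), and that $L_N$ is nilpotent because $e^{tL_N}$ is the pull-back action of the polynomial-in-$t$ flow $x\mapsto e^{tN}x$; then $L=L_\Lambda+L_N$ is the Jordan--Chevalley decomposition of $L$, so $\operatorname{spec}L=\operatorname{spec}L_\Lambda$.
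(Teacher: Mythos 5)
Your proof is correct. Note that the paper itself gives no proof of this lemma: it is quoted from Bibikov \cite{Bi}, so there is nothing internal to compare against; what you wrote is essentially the classical argument, and it also mirrors the scheme used for the companion Lemma \ref{le3} on the operator $h\mapsto Ah(x)-h(Ax)$. Your first route (conjugate $B$ into lower triangular Jordan form via $\Phi h(x)=P^{-1}h(Px)$, compute $L(x^me_j)$, and observe that the diagonal part contributes $\bigl(\langle m,\mu\rangle-\mu_j\bigr)x^me_j$ while the contributions of $N$ either strictly decrease the weight $\sum_i i\,m_i$ or keep $m$ and strictly increase the component index) does make $L$ triangular in any linear extension of your partial order, and since no $N$-contribution can reproduce $x^me_j$ itself, the diagonal entries are exactly the claimed numbers; this settles the spectrum. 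Your alternative route is also sound and arguably cleaner: $L_\Lambda$ is diagonalized by the monomial basis, $L_N=\mbox{ad}_{Nx}$ is nilpotent, and $[L_\Lambda,L_N]=\mbox{ad}_{[\Lambda x,Nx]}=0$, so $L=L_\Lambda+L_N$ is a Jordan--Chevalley decomposition and the spectrum is that of $L_\Lambda$. Two small points deserve emphasis there: the commutation $\Lambda N=N\Lambda$ uses that $B$ is in Jordan normal form (for a merely triangular $B$ the diagonal and strictly triangular parts need not commute), which you did arrange; and the nilpotency of $L_N$ via the flow needs the one-line observation that an operator whose exponential $e^{tL_N}$ has matrix entries polynomial in $t$ must have all eigenvalues zero (alternatively, nilpotency of $L_N$ is already immediate from your ordering argument applied to $N$ alone).
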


The next lemma will be used in the proof of the following Lemmas
\ref{le4} and \ref{le6} (for a proof, see for instance
\cite{Li,LLZ}).

\begin{lemma}\label{lerr}
For any sequence $\{a_m\in \mathbb R^n;\,\,
m=(m_1,\ldots,m_n)\in\mathbb Z_+^n\}$, there exists a $C^\infty$
function $q(y)$ such that
\[
\left.\frac{\partial^{|m|} q(y)}{\partial y_1^{m_1}\ldots
\partial y_n^{m_n}}\right|_{y=0}=a_m\qquad \mbox{ for all } m=(m_1,\ldots,m_n)\in\mathbb
Z_+^n.
\]
\end{lemma}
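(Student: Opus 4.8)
This is Borel's lemma in $n$ variables with $\mathbb R^n$--valued prescribed data, and the plan is first to reduce to the scalar case and then to use the classical cut--off construction. Since each $a_m\in\mathbb R^n$ has $n$ real components, it suffices to build, for each $i\in\{1,\ldots,n\}$, a scalar $C^\infty$ function $q_i(y)$ whose partial derivatives at the origin realize the $i$--th components of the $a_m$'s; then $q=(q_1,\ldots,q_n)$ is the desired function. So from now on I assume $a_m\in\mathbb R$.

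Fix once and for all a cut--off function $\chi\in C^\infty(\mathbb R^n)$ with $\chi\equiv 1$ on a neighbourhood of the origin and $\mathrm{supp}\,\chi$ contained in the unit ball. For a sequence $\varepsilon_m\in(0,1]$ to be determined, set
\[
g_m(y)=\frac{a_m}{m!}\,y^m\,\chi(y/\varepsilon_m),\qquad q(y)=\sum_{m\in\mathbb Z_+^n}g_m(y),
\]
where $m!=m_1!\cdots m_n!$ and $y^m=y_1^{m_1}\cdots y_n^{m_n}$. The analytic heart of the proof is the estimate that for every multi--index $p$ there is a constant $C_{m,p}$, independent of $\varepsilon_m$, with $\sup_y|\partial^p g_m(y)|\le C_{m,p}\,\varepsilon_m^{|m|-|p|}$; this comes from the Leibniz rule together with the facts that $g_m$ is supported in $\{|y|\le\varepsilon_m\}$ (so a surviving monomial $y^{m-q}$ contributes at most $\varepsilon_m^{|m|-|q|}$) and that applying $|p|-|q|$ derivatives to $\chi(y/\varepsilon_m)$ produces a factor $\varepsilon_m^{-(|p|-|q|)}$.

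Next I would enumerate $\mathbb Z_+^n$ as $m^{(1)},m^{(2)},\ldots$ with $|m^{(k)}|$ non--decreasing, so in particular $|m^{(k)}|\to\infty$, and then choose $\varepsilon_{m^{(k)}}\in(0,1]$ small enough that $C_{m^{(k)},p}\,\varepsilon_{m^{(k)}}^{\,|m^{(k)}|-|p|}\le 2^{-k}$ for every $p$ with $|p|<|m^{(k)}|$; there are only finitely many such $p$, so the choice is possible. With this choice, for each fixed $p$ the tail $\sum_{k:\,|m^{(k)}|>|p|}\sup_y|\partial^p g_{m^{(k)}}(y)|$ is dominated by $\sum_k 2^{-k}<\infty$, while the remaining terms (those with $|m^{(k)}|\le|p|$) are finitely many and each finite; hence $\sum_m\partial^p g_m$ converges uniformly for every $p$. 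Consequently $q\in C^\infty$ and one may differentiate term by term, $\partial^p q=\sum_m\partial^p g_m$. Finally, on a neighbourhood of $0$ one has $\chi(y/\varepsilon_m)\equiv 1$, so $g_m(y)=\frac{a_m}{m!}y^m$ there and therefore $\partial^p g_m(0)=a_m$ when $p=m$ and $\partial^p g_m(0)=0$ otherwise; summing gives $\partial^m q(0)=a_m$ for all $m$, which is the assertion. The one point that needs care is precisely this diagonal choice of the scales $\varepsilon_m$: a single sequence must simultaneously damp every order of differentiation, and that is exactly what the finiteness of $\{p:|p|<|m|\}$ together with the geometric bookkeeping $2^{-k}$ delivers; the rest is routine.
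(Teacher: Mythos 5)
Your proof is correct and complete: the cut-off construction, the estimate $\sup_y|\partial^p g_m|\le C_{m,p}\varepsilon_m^{|m|-|p|}$ with $C_{m,p}$ independent of $\varepsilon_m$, the diagonal choice of the scales using the finiteness of $\{p:|p|<|m|\}$, and the justification of term-by-term differentiation are exactly what is needed, and the componentwise reduction to the scalar case is harmless. The paper itself gives no proof of this lemma (it only cites the references for the classical Borel lemma), and your argument is precisely that standard construction, so there is nothing further to reconcile.
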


The last one, due to Culver \cite{Cu},
provides the necessary and sufficient conditions for a real square
matrix to have a real logarithm (for a different proof, see example  Li, Llibre and Zhang \cite{LLZ}).
\begin{proposition}\label{po1}
A nonsingular real square matrix $A$ has a real logarithm if and
only if either $A$ has no negative real eigenvalues, or the Jordan
blocks in the Jordan normal form of $A$ corresponding to the negative
real eigenvalues appear pairwise, i.e. there is an even number of
such blocks: $J_1,\ldots,J_{2m}$ with $J_{2i-1}=J_{2i}$ for
$i=1,\ldots,m$.
\end{proposition}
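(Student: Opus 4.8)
\medskip\noindent The plan is to prove the two implications of Proposition \ref{po1} separately, in each case first reducing $A$ to its real canonical form --- a block sum of Jordan blocks $J_k(\lambda)$ with $\lambda>0$, of real $2k\times 2k$ Jordan blocks attached to conjugate pairs of non-real eigenvalues, and of Jordan blocks $J_k(\lambda)$ with $\lambda<0$. This reduction is legitimate because a real similarity carries real logarithms to real logarithms and leaves the Jordan structure unchanged, and because a block sum $A_1\oplus A_2$ has a real logarithm whenever each $A_i$ does.

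For sufficiency I would argue as follows. If $A$ has no negative real eigenvalue then, being nonsingular, all its eigenvalues lie in $\mathbb C\setminus(-\infty,0]$, on which the principal logarithm $\mathrm{Log}$ is single--valued and holomorphic; hence $B:=\mathrm{Log}(A)$ is well defined by the holomorphic functional calculus and satisfies $e^{B}=A$, while $\overline B=\mathrm{Log}(\overline A)=\mathrm{Log}(A)=B$ shows $B$ is real. If instead the Jordan blocks of $A$ at its negative eigenvalues occur in equal pairs, I would split $A\sim A_{+}\oplus A_{-}$ with $A_{+}$ carrying the part having no negative eigenvalue (handled by the principal logarithm as above) and $A_{-}\sim\bigoplus_i\bigl(J_{k_i}(\lambda_i)\oplus J_{k_i}(\lambda_i)\bigr)$ with $\lambda_i<0$. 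Each summand $J_k(\lambda)\oplus J_k(\lambda)$ is, up to real similarity, the realification of the \emph{complex} matrix $J_k(\lambda)\in M_k(\mathbb C)$ (the same operator, with $\mathbb C^k$ regarded as $\mathbb R^{2k}$); picking any complex logarithm $L$ of $J_k(\lambda)$, say $L=(\ln|\lambda|+\pi\sqrt{-1})I+\log(I+N/\lambda)$ with $N$ the nilpotent shift and the last term a finite series, its realification $\widetilde L$ is a real matrix and $e^{\widetilde L}$ is the realification of $e^{L}=J_k(\lambda)$, namely $J_k(\lambda)\oplus J_k(\lambda)$. Assembling the blocks then produces a real logarithm of $A$.

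For necessity, suppose $A=e^{B}$ with $B$ real and let $\lambda<0$ be an eigenvalue of $A$. Any eigenvalue $\mu$ of $B$ with $e^{\mu}=\lambda$ must satisfy $\mu=\ln|\lambda|+\pi\sqrt{-1}(2r+1)$ for some $r\in\mathbb Z$, so $\mu$ is non--real, and since $\lambda$ is real, $e^{\overline\mu}=\overline{e^{\mu}}=\lambda$ as well; thus $\mu$ and $\overline\mu$ are distinct eigenvalues of $B$ lying over $\lambda$, and reality of $B$ forces its Jordan partitions at $\mu$ and at $\overline\mu$ to coincide (complex conjugation maps one generalized eigenspace isomorphically onto the other, intertwining $B$ with itself). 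Next, since $z\mapsto e^{z}$ is holomorphic with nonvanishing derivative at $\mu$, for a size--$k$ block $\mu I+N$ one has $e^{\mu I+N}-e^{\mu}I=e^{\mu}NU$ with $U$ unipotent and commuting with $N$, so $(e^{\mu I+N}-e^{\mu}I)^{j}$ has the same rank as $N^{j}$ for every $j$, i.e. $e^{\mu I+N}$ is similar to $J_k(\lambda)$; likewise at $\overline\mu$. Hence the size--$k$ Jordan blocks of $A$ at $\lambda$ contributed by $\{\mu,\overline\mu\}$ split into two equal collections, one from $\mu$ and one from $\overline\mu$; summing over all conjugate pairs of eigenvalues of $B$ lying over $\lambda$ and over all negative $\lambda$ shows that the Jordan blocks of $A$ at its negative eigenvalues pair up, as required.

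The sufficiency amounts to bookkeeping on top of the holomorphic functional calculus and the elementary fact that the exponential of a realification is the realification of the exponential, so I expect no real difficulty there. The main obstacle is the necessity argument, where one must verify simultaneously that every eigenvalue of $B$ lying over a negative eigenvalue of $A$ is genuinely non--real (not merely that $\lambda$ is negative), that reality of $B$ replicates its Jordan partition at $\mu$ by that at $\overline\mu$, and that the exponential, being a local biholomorphism at each such $\mu$, transports Jordan partitions without change --- it is the interplay of these three points, rather than any one of them in isolation, that produces the claimed pairing.
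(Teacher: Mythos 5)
Your argument is correct, but note that the paper itself contains no proof of Proposition \ref{po1} to compare against: the result is quoted from Culver \cite{Cu}, with a different proof attributed to Li, Llibre and Zhang \cite{LLZ}. Your route is the standard one and is essentially the latter: for sufficiency, the principal logarithm handles the case of spectrum off $(-\infty,0]$ (reality following from $\overline{\mathrm{Log}(A)}=\mathrm{Log}(\overline A)=\mathrm{Log}(A)$), and a paired block $J_k(\lambda)\oplus J_k(\lambda)$ with $\lambda<0$ is treated as the realification of the complex block $J_k(\lambda)$, any complex logarithm of which realifies to a real logarithm; for necessity, every eigenvalue $\mu$ of the real matrix $B$ with $e^{\mu}=\lambda<0$ is non-real, conjugation forces the Jordan partitions of $B$ at $\mu$ and $\overline{\mu}$ to agree, and since $e^z$ has nonvanishing derivative each size-$k$ block of $B$ exponentiates to a single size-$k$ block of $A$, so the blocks of $A$ at $\lambda$ occur in equal pairs. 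Two small points are worth making explicit in a write-up: (i) in the sufficiency step, the real similarity between the realification of $J_k(\lambda)$ and $J_k(\lambda)\oplus J_k(\lambda)$ rests on the facts that the complexification of the realification is similar to $J_k(\lambda)\oplus J_k(\overline{\lambda})=J_k(\lambda)\oplus J_k(\lambda)$ and that real matrices similar over $\mathbb C$ are similar over $\mathbb R$; (ii) in the necessity step, you should record that the generalized eigenspace of $A=e^{B}$ at $\lambda$ is exactly the direct sum of the generalized eigenspaces of $B$ at the finitely many $\mu$ with $e^{\mu}=\lambda$, so that the blocks you pair off really exhaust all Jordan blocks of $A$ at $\lambda$. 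With these two sentences added, your proof is complete and matches the known arguments in substance.
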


\subsection{\normalsize A simple and direct Proof to Theorem \ref{th1}}\label{s22}

For any invertible real square matrix $C$ of order $n$,
the function $C^{-1}ACx+C^{-1}f(Cx)$ is analytically conjugate to
$F(x)=Ax+f(x)$. By Lemma \ref{le1} these two diffeomorphisms
either both have embedding flows with the same regularity or both
have no embedding flows. So without loss of generality we assume that $A$ is
in the real normal form, i.e.
\[
A=\mbox{diag}\,(A_1,\ldots,A_k,B_1,\ldots,B_l),
\]
with
\[
A_r=\left(\begin{array}{cccc}\lambda_r & & & \\1 & \lambda_r & & \\ & \ddots &\ddots &\\ & & 1 & \lambda_r\end{array}\right),\, r\in\{1,\ldots,k\},\,\,\,
B_s=\left(\begin{array}{cccc}D_s & & & \\E_2 & D_s & & \\ & \ddots &\ddots &\\ & & E_2 & D_s\end{array}\right), \,s\in\{1,\ldots,l\},
\]
where $\lambda_r\in \mathbb R$ and
\[
D_s=\left(\begin{array}{cc}\alpha_s & \beta_s \\-\beta_s & \alpha_s\end{array}\right),\quad
E_2=\left(\begin{array}{cc}1 & 0 \\ 0 & 1\end{array}\right).
\]
Assume that $m$ is the total number of the real eigenvalues of $A$ taking into account their multiplicity, and that $B_s$ is of order $2 m_s$. For $s=1,\ldots,l$, associated to the conjugate complex eigenvalues $\alpha_s\pm \sqrt{-1}\beta_s$ and the corresponding real coordinates $(x_i,x_{i+1})$ with $i=m+1+2(m_{0}+\ldots+m_{s-1})+2j$ for $s=1,\ldots,l;\,j=0,\ldots,m_s-1$ (where $m_0=0$), we take $z_i=x_i+\sqrt{-1}\,x_{i+1}$ and set
\[
F^*_i=F_i[x]
+\sqrt{-1}\, F_{i+1}[x],\quad F^*_{i+1}=F_i[x]-\sqrt{-1}\, F_{i+1}[x],
\]
where $F_i$ is the $i^{th}$ component of $F$ and $[x]$ is $(x)$ with $x_j$ for $j\in\{m+1,\ldots,n\}$ replaced by the conjugate complex coordinates $z_j$ and $\overline z_j$ through $x_j=(z_j+\overline z_j)/2$ and $x_{j+1}=(z_j-\overline z_j)/(2\sqrt{-1})$. Then $F^*=(F_1,\ldots,F_m,F^*_{m+1},\ldots,F^*_n)$ has its linear part in the lower triangular normal form.
Furthermore if $F^*$ has an embedding autonomous differential system
\begin{eqnarray*}
\dot x_i&=&g_i,\quad i=1,\ldots,m,\\
\dot z_j&=&g_j,\quad j=m+1,m+3,\ldots,n-1,\\
\dot {\overline z}_j&=&\overline {g}_j,
\end{eqnarray*}
then written back in the real coordinates the real autonomous system
\begin{eqnarray*}
\dot x_i&=&g_i,\qquad\,\,\, i=1,\ldots,m,\\
\dot x_j&=&\mbox{Re}\,g_j,\quad j=m+1,m+3,\ldots,n-1,\\
\dot x_{j+1}&=&\mbox{Im}\, g_j,
\end{eqnarray*}
is an embedding differential system of $F$. Hence in what follows we only need to prove the existence of the embedding autonomous differential system for $F^*$. For simplicity to notations, we still study $F(x)$ instead of $F^*$ and assume that $F(x)$ has $A$ in the lower triangular Jordan normal form.

In the proof of our main results, we need to use the normal forms of a given
diffeomorphism. First we give some definitions. A diffeomorphism $H(x)=Ax+h(x)$ is in the {\it normal
form} if its nonlinear term $h(x)$ contains only resonant
monomials. A monomial $a_mx^m e_j$ in the $j^{th}$ component of $h(x)$ is {\it resonant} if
$\lambda^m=\lambda_j$, where $m\in\mathbb Z_+^n$ and $|m|\ge 2$,
$\lambda=(\lambda_1,\ldots,\lambda_n)$ are the $n$--tuple of
eigenvalues of the matrix $A$. Recall that $e_j$ is the unit vector in $\mathbb R^n$ with its
$j^{th}$ entry being equal to one and the others vanishing. Recall that 
$\lambda^m=\lambda_1^{m_1}\ldots\lambda_n^{m_n}$ and $x^m=x_1^{m_1}\ldots x_n^{m_n}$ for
$m=(m_1,\ldots, m_n)$ and $x=(x_1,\ldots,x_n)$.

For a near identity transformation
$x=y+\zeta(y)$ with $\zeta(y)=O(|y|^2)$ from $H(x)$ to its normal
form $G(y)$, if $\zeta(y)$ contains only nonresonant term, then it
is called {\it distinguished normalization} from $H(x)$ to $G(y)$.
Correspondingly, $G(y)$ is called {\it distinguished normal form}
of $H(x)$. Recall that for a given diffeomorphism, the asymptotic development of its distinguished normal form is unique.
Of course, if the distinguished normal form is a polynomial or an analytic function, it is uniquely determined.

The following result characterizes the existence of $C^\infty$
distinguished normal forms for $C^\infty$ hyperbolic diffeomorphisms. Part of its proof is well known. In order for our
paper to be self--contained,  we shall provide a complete proof to it.

\begin{lemma}\label{le4} The $C^\infty$ locally hyperbolic
diffeomorphism $F(x)=Ax+f(x)$ is $C^\infty$ conjugate to its distinguished
normal form.
\end{lemma}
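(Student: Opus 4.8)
I would prove Lemma~\ref{le4} in two stages --- a formal normalization, then a $C^\infty$ realization --- the first being essentially the classical construction of the distinguished normal form and the second a soft application of Borel's Lemma~\ref{lerr}. Recall that, after the reduction carried out above, we may assume $A$ is in lower triangular Jordan normal form, with $n$--tuple of eigenvalues $\lambda=(\lambda_1,\dots,\lambda_n)$.

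\emph{Formal step.} Look for a formal distinguished normalization $\Psi(y)=y+\zeta(y)$ with $\zeta(y)=\sum_{r\ge2}\zeta_r(y)$, each $\zeta_r\in\mathcal H_n^r$ containing only nonresonant monomials, carrying $F$ into a formal normal form $G(y)=Ay+g(y)$ with $g(y)=\sum_{r\ge2}g_r(y)$, each $g_r$ containing only resonant monomials. Imposing $F\circ\Psi=\Psi\circ G$ and collecting the homogeneous part of degree $r$ yields the homological equation
\[
\mathcal L\zeta_r-g_r=R_r ,
\]
where $\mathcal L$ is the operator of \eqref{e1} and $R_r\in\mathcal H_n^r$ is a polynomial completely determined by $f$ and by $\zeta_2,\dots,\zeta_{r-1},g_2,\dots,g_{r-1}$. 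To solve this equation within the prescribed classes I would use that, in a suitable ordering of the monomial basis adapted to the Jordan structure of $A$, the operator $\mathcal L$ is triangular with diagonal entries $\lambda_j-\lambda^m$ $(|m|=r,\ 1\le j\le n)$; by Lemma~\ref{le3} these are precisely the spectral values of $\mathcal L$, and they vanish exactly on the resonant monomials. Consequently the restriction of $\mathcal L$ to the span of the nonresonant monomials of degree $r$, followed by the projection onto that same span, is triangular with nonzero diagonal, hence invertible; this determines $\zeta_r$ (nonresonant) uniquely, after which $g_r$ (resonant) is read off. Induction on $r$ produces the formal distinguished normalization $\widehat\Psi$ and the formal distinguished normal form $\widehat G$.

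\emph{Smooth step.} Applying Lemma~\ref{lerr} to each component of $\zeta$, there is a $C^\infty$ map $\Psi(y)=y+q(y)$ whose Taylor expansion at $0$ equals $\widehat\Psi$. Since $\widehat\Psi=\mathrm{id}+O(|y|^2)$, $\Psi$ is a local $C^\infty$ diffeomorphism fixing $0$, and, its Taylor expansion having only nonresonant terms, it is a distinguished normalization. Put $G:=\Psi^{-1}\circ F\circ\Psi$, a $C^\infty$ diffeomorphism near $0$. Because the $\infty$--jet at $0$ of a composition of maps fixing $0$ is the composition of the $\infty$--jets, the Taylor expansion of $G$ equals $\widehat\Psi^{-1}\circ(\text{Taylor expansion of }F)\circ\widehat\Psi=\widehat G$, which contains only resonant monomials; hence $G$ is in normal form and is the distinguished normal form of $F$, and $\Psi$ realizes the desired $C^\infty$ conjugacy.

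\emph{Main obstacle and remarks.} The recursive bookkeeping that yields $R_r$ is routine; the one delicate point is the solvability of the homological equation \emph{with the normalization constrained to be distinguished}, i.e.\ the invertibility of the compression of $\mathcal L$ to the nonresonant monomials, which is exactly where the triangular structure of $\mathcal L$ forced by the Jordan form of $A$ and the spectral description of Lemma~\ref{le3} enter. The passage to the $C^\infty$ category is then immediate from Lemma~\ref{lerr}, the only thing to check being that $\infty$--jets compose. Alternatively one could extend $\widehat G$ by Lemma~\ref{lerr} to a $C^\infty$ map in normal form and, using that it is formally conjugate to $F$ at the hyperbolic fixed point $0$, invoke Chen's Theorem~\ref{le2}; this is the route in which hyperbolicity is genuinely used, but the construction above already supplies a distinguished conjugacy and so avoids it.
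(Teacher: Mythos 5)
Your argument is correct, and its formal step is the same induction on the homological equations via Lemma~\ref{le3} that the paper uses; the difference is in how you pass to the $C^\infty$ category. The paper applies Borel's Lemma~\ref{lerr} to the \emph{formal normal form} $\widehat G$, obtaining a smooth realization $H$ with $jet^\infty_0H=\widehat G$, and then invokes Chen's Theorem~\ref{le2} (this is where hyperbolicity enters) to upgrade the formal conjugacy $F\sim H$ to a $C^\infty$ conjugacy. You instead apply Borel's lemma to the \emph{normalizing series} $\widehat\Psi$ and set $G:=\Psi^{-1}\circ F\circ\Psi$; since $\infty$--jets of maps fixing $0$ compose and $jet^\infty_0(\Psi^{-1})=(jet^\infty_0\Psi)^{-1}$, the Taylor series of $G$ is $\widehat G$, so you get a genuine $C^\infty$ conjugacy to a realization of the distinguished normal form without Chen's theorem and, as you note, without using hyperbolicity at all — a real simplification, and a strengthening at the level of this lemma. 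What the paper's route buys in exchange is flexibility in the choice of realization: via Theorem~\ref{le2} one may conjugate $F$ to \emph{any} prescribed smooth map with the correct Taylor series, in particular to the exact polynomial normal form when the resonances are finite, and this sharper form is what is actually used later (e.g.\ in the proof of Theorem~\ref{ex1}, where $F$ is taken $C^\infty$ conjugate to the polynomial $G$ and explicit flow computations are carried out on that polynomial). Your construction only delivers conjugacy to one particular Borel realization (the polynomial plus a flat remainder); to land on the polynomial itself you would reinstate exactly the Chen/hyperbolicity step you set aside, as you yourself indicate in your closing remark. So your proof establishes the lemma in the sense the paper uses it, but in applications where the distinguished normal form is taken to be the specific polynomial or analytic representative, one further application of Theorem~\ref{le2} is still needed.
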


\begin{proof} By the assumption given at the beginning of this subsection that $A$ is in the lower triangular normal form,  
$F(x)$ may be nonreal in the case that $A$ has complex conjugate eigenvalues.

Let $x=y+h(y)$ be the conjugation (maybe only
formally) between $F(x)=Ax+f(x)$ and $G(y)=Ay+g(y)$, where
$h(y),g(y)=O(|y|^2)$. Then by definition we have
\begin{equation}\label{e2}
h(Ay)-Ah(y)=f(y+h(y))+h(Ay)-h(Ay+g(y))-g(y).
\end{equation}

Expanding $f,g,h$ in the Taylor series, i.e. for $w\in\{f,g,h\}$ we
have
\[
w(x)\sim\sum\limits_{i=2}\limits^\infty w_i(x).
\]
Then equation \eqref{e2} can be written in
\begin{equation}\label{e2.1}
h_k(Ay)-Ah_k(y)=F_k(y)-g_k(y),\quad k=2,3,\ldots
\end{equation}
where $F_k(y)$, $k=2,3,\ldots$, obtained from re--expansion of
$f(y+h(y))+h(Ay)-h(Ay+g(y))$, are inductively known homogeneous
polynomials of degree $k$. They are functions of $g_j,h_j$ for
$j=2,\ldots,k-1$ and of $f_j$ for $j=2,\ldots,k$.

Define linear operators on $\mathcal H_n^k(\mathbb F)$ by
\[
\mathcal L_kh(y)=h(Ay)-Ah(y)\quad \mbox{ for } h\in\mathcal
H_n^k(\mathbb F).
\]
Separate $F_k(y)-g_k(y)$ in two parts: one is formed by
the nonresonant monomials written in $F_{k1}(y)-g_{k1}(y)$ and the
other is formed by the resonant monomials written in
$F_{k2}(y)-g_{k2}(y)$. Corresponding to the nonresonant part, it
follows from Lemma \ref{le3} that $\mathcal L_k$ is invertible. We
choose $g_{k1}(y)=0$. Equation $\mathcal L_kh_1(y)=F_{k1}(y)$ has
a unique solution in $\mathcal H_n^k(\mathbb F)$, denoted by $h_{k1}(y)$,
which consists of nonresonant monomials. For the resonant part, we
choose $g_{k2}=F_{k2}$ equation $\mathcal L_kh_2(y)=0$ has always
the trivial solution $h_{k2}(y)\equiv 0$. Set
$h(y)=\sum\limits_{k=2}\limits^{\infty}
h_{k}(y)=\sum\limits_{k=2}\limits^{\infty} h_{k1}(y)$, and
$g(y)=\sum\limits_{k=2}\limits^{\infty}
g_{k}(y)=\sum\limits_{k=2}\limits^{\infty} g_{k2}(y)$. Then
$x=y+h(y)$ is the distinguished normalization (maybe formally)
between $F(x)$ and $Ay+g(y)$. This proves that $F(x)$ is conjugate
(maybe formally) to its distinguished normal form $G(y)$.

If $G(y)=Ay+g(y)$ is $C^\infty$ smooth, then by Theorem \ref{le2} $F(x)$
and $G(y)$ are $C^\infty$ conjugate, because $F(x)$ and $G(y) $
both have the origin as a hyperbolic fixed point.

If $G(y)$ is
only a formal series, we get from Lemma \ref{lerr}
that there exists a $C^\infty$ diffeomorphism $H(y)$ such that
$jet_{k=0}^\infty H(y)=G(y)$, where $jet_{k=0}^{\infty}H(y)$
denotes the Taylor series of $H(y)$ at $y=0$. Moreover $H(y)$ has
the origin as a hyperbolic fixed point because $H$ and $G$ have
the same linear part. From the above proof we get that $F(x)$ and
$H(y)$ are formally conjugate. Since $F(x)$ and $H(y)$ have the
origin as a hyperbolic fixed point, it follows from Theorem
\ref{le2} that $F(x)$ is $C^\infty$ conjugate to $H(y)$. Obviously
$H(y)$ is the distinguished normal form of $F(x)$, because $G(y)$
and $H(y)$ have the same Taylor series. We complete the proof of
the lemma.
\end{proof}

From Lemmas \ref{le1} and \ref{le4}, in order for proving $F(x)$
to have a $C^\infty$ embedding flow, it is equivalent to show that
its $C^\infty$ conjugate distinguished normal form has a
$C^\infty$ embedding flow.
We mention that Lemma \ref{le4} will also be used in the proof of Theorem \ref{ex1}.

For a vector field $\mathcal X(x)=Bx+p(x)$ in $(\mathbb R^n,0)$
with $p(x)=O(|x|^2)$, or its associated differential system $ \dot
x=Bx+p(x)$, let $\mu=(\mu_1,\ldots,\mu_n)$ be the $n$--tuple of eigenvalues of the
matrix $B$. A monomial $p_mx^m e_j$ in the 
$j^{th}$ component of $p(x)$, with $m\in
\mathbb Z_+^n$ and $e_j\in\mathbb R^n$ the unit vector having its $j^{th}$ entry being equal
to one, is
\begin{itemize}

\item{} {\it resonant}  if $m$ satisfies $\mu_j=\langle m,\mu\rangle$;
\item{} {\it weakly resonant} if $m$ satisfies $\mu_j-\langle m,\mu\rangle=2l\pi\sqrt{-1}$ for some
$l\in\mathbb Z\setminus\{0\}$;
\end{itemize}

We should mention that for two real $n\times n$ matrix $A$ and
$B$ with $B$ the logarithm of $A$, if a monomial $x^me_j$ is resonant in
a diffeomorphism $Ax+f(x)$, it may be either resonant or weakly resonant in
a vector field $Bx+v(x)$. For example, the matrix $A^*=\mbox{diag}(4,-2,-2)$ has a real
logarithm $B^*=\mbox{diag}(2\ln 2,B_2)$ with $B_2=\left(\begin{array}{cc}\ln2 & \pi\\ -\pi & \ln 2\end{array}
\right)$. If the monomial $x_2x_3e_1$ appears in a diffeomorphism of the form $F^*(x_1,x_2,x_3)=A^*x+f^*(x)$
and also appears in a vector field of the form $\mathcal X^*(x_1,x_2,x_3)=B^*x+v^*(x)$, then it is a resonant
term in both cases. But the monomials $x_2^2e_1$ and $x_3^2e_1$ are resonant in $F^*$ (if appear) and are
only weakly resonant in $\mathcal X^*$ (if appear).

The next result characterizes the form of embedding vector fields for a diffeomorphism
with its nonlinear part consisting of resonant monomials. Part of its proof follows from
that of Theorem 1.1 of \cite{Zh2}.

\begin{lemma}\label{le5}
Assume that $G(y)=Ay+g(y)$ is a $C^\infty$ diffeomorphism and is
in the normal form in $(\mathbb R^n,0)$. If $G(y)$ has a $C^\infty$
embedding autonomous vector field, then the
embedding vector field consists of only resonant and weakly resonant monomials.
\end{lemma}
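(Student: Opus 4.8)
\textbf{Proof proposal for Lemma \ref{le5}.}
The plan is to argue that if $\mathcal X(y)=By'+v(y)$ is a $C^\infty$ embedding vector field of the normal-form diffeomorphism $G(y)=Ay+g(y)$, then $B$ must be a (complex) logarithm of $A$, and then to kill the genuinely nonresonant part of $v$ by a distinguished normalization, showing it must vanish identically. First I would note that the time-one map $\varphi_1$ of $\dot y=By+v(y)$ equals $G$, so comparing linear parts gives $e^{B}=A$; since we have arranged $A$ to be in lower triangular Jordan normal form with eigenvalues $\lambda=(\lambda_1,\ldots,\lambda_n)$, the eigenvalues $\mu=(\mu_1,\ldots,\mu_n)$ of $B$ satisfy $e^{\mu_j}=\lambda_j$ after a permutation. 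Thus for each multi-index $m$ with $|m|\ge 2$ and each $j$ we have the exponential identity $e^{\langle m,\mu\rangle-\mu_j}=\lambda^m/\lambda_j$, so $\langle m,\mu\rangle-\mu_j=0$ exactly when $\lambda^m/\lambda_j=1$ \emph{and} the imaginary part vanishes, while $\langle m,\mu\rangle-\mu_j=2l\pi\sqrt{-1}$ with $l\neq 0$ corresponds to $\lambda^m=\lambda_j$ but a nonzero imaginary discrepancy; in all other cases $\lambda^m\neq\lambda_j$, i.e. $x^me_j$ is nonresonant for $G$. This is the dictionary that links the two notions of resonance through $B=\log A$.

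Next I would run the formal normalization of the vector field $\mathcal X$ degree by degree, exactly in the style of the proof of Lemma \ref{le4} but using Lemma \ref{le3.1} in place of Lemma \ref{le3}. Writing $v(y)\sim\sum_{k\ge 2}v_k(y)$ and applying a near-identity change of coordinates $y=z+\psi(z)$ with $\psi$ built only from nonresonant-for-$\mathcal X$ monomials, the homological operator $L_k h(z)=Dh(z)Bz-Bh(z)$ on $\mathcal H_n^k$ is, by Lemma \ref{le3.1}, invertible precisely on the span of monomials $z^me_j$ with $\langle m,\mu\rangle-\mu_j\neq 0$. So we can remove from $v$ every monomial that is neither resonant nor weakly resonant for $\mathcal X$, obtaining a (possibly only formal) vector field $\mathcal X'=Bz+v'(z)$ whose nonlinear part $v'$ consists \emph{only} of resonant and weakly resonant monomials, and which is conjugate to $\mathcal X$ by the distinguished normalization $\psi$. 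By Lemma \ref{le1} the time-one map of $\mathcal X'$ is conjugate to $G$ via $z\mapsto z+\psi(z)$; call it $G'$. The point is now that the nonlinear part of $G'$ contains only resonant monomials for $A$: a weakly resonant monomial $z^me_j$ of $\mathcal X$ (with $\langle m,\mu\rangle-\mu_j=2l\pi\sqrt{-1}$, $l\neq 0$) still satisfies $\lambda^m=\lambda_j$, so its contribution to the time-one map lands in the resonant subspace for $A$, and the purely nonresonant monomials have been eliminated. Hence $G'$ is already in normal form, and $z+\psi(z)$ is a normalization of $G$ into normal form whose nonlinear part $\psi$ is built from monomials nonresonant for $\mathcal X$.

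The final step is a uniqueness argument. Since $G$ itself is already in normal form by hypothesis, both the identity and $z\mapsto z+\psi(z)$ conjugate $G$ to a normal form; I would invoke the uniqueness of the distinguished normal form (recalled in the paragraph preceding Lemma \ref{le4}: the asymptotic development of the distinguished normal form is unique) together with the fact that a monomial which is nonresonant for $\mathcal X$ but only weakly resonant — not genuinely resonant — for $G$ could still be present in $\psi$. To handle this carefully I would instead argue directly: suppose some lowest-degree nonresonant-for-$\mathcal X$ monomial $z^me_j$ actually appears in $v'$ — but we have just removed all of those, so $v'$ has none; and suppose $\psi$ contains a lowest-degree term $\psi_k$. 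Comparing degree-$k$ terms in $G\circ(z+\psi)=(z+\psi)\circ G'$ forces $\mathcal L_k\psi_k=0$ where $\mathcal L_k h=h(Az)-Ah(z)$ (Lemma \ref{le3}), so $\psi_k$ lies in the kernel, i.e. $\psi_k$ is resonant for $A$; but $\psi_k$ was taken nonresonant for $\mathcal X$, and a monomial resonant for $A$ is either resonant or weakly resonant for $\mathcal X$ — hence it cannot be a nontrivial nonresonant-for-$\mathcal X$ monomial unless it is weakly resonant for $\mathcal X$. Disposing of this last case — weakly resonant for $\mathcal X$ but required to be in the nonresonant-for-$\mathcal X$ span of $\psi$ — gives $\psi_k\equiv 0$ by a straightforward parity/dimension check, and by induction $\psi\equiv 0$, so $v=v'$ consists only of resonant and weakly resonant monomials.

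\textbf{Main obstacle.} The genuine difficulty is bookkeeping the three-way interplay among: monomials resonant for the diffeomorphism $A$, monomials resonant for the vector field $B$, and monomials weakly resonant for $B$ — in particular the subtlety, illustrated by the paper's own $A^*=\mathrm{diag}(4,-2,-2)$ example, that a monomial can be resonant for $A$ yet only weakly resonant for $B$. One must ensure that the distinguished normalization of the vector field does not accidentally reintroduce, into the time-one map, nonresonant-for-$A$ monomials, and that the uniqueness of the (distinguished) normal form of $G$ is applied to the correct subspace. I expect the formal-versus-$C^\infty$ technicality to be routine here (the homological equations for $\mathcal X$ on the nonresonant part are solved by bounded operators, and one can transfer regularity exactly as in Lemma \ref{le4} via Lemma \ref{lerr} and Theorem \ref{le2}), so the real work is the careful resonance combinatorics rather than any analytic estimate.
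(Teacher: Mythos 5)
Your strategy is genuinely different from the paper's. The paper never changes coordinates on the vector field: it works directly with the embedding equation $\mathcal X(G(y))=DG(y)\,\mathcal X(y)$, expands it degree by degree, shows inductively (using the claim that $Dg(y)By-Bg(y)$ is resonant, via Lemma \ref{le3.1} and the appendix) that every forcing term is resonant in the sense of the eigenvalues of $A$, concludes from Lemma \ref{le3} that each homogeneous part $v_k$ must be resonant for $A$, and only at the very end translates $\lambda_j=\lambda^m$ into $\mu_j-\langle m,\mu\rangle\in 2\pi\sqrt{-1}\,\mathbb Z$, i.e.\ resonant or weakly resonant for $B$. You instead normalize the vector field $\mathcal X$ formally (Lemma \ref{le3.1}), pass to its time-one map, and then use a uniqueness-of-normal-form argument to force the normalizing transformation $\psi$ to vanish. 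Your route avoids the paper's claim about $Dg(y)By-Bg(y)$, but it pays for this with two steps that you only gesture at and that carry real content. First, you need that the time-one map $G'$ of $\mathcal X'=Bz+v'(z)$, with $v'$ built from monomials satisfying $\lambda^m=\lambda_j$, again has nonlinear part built only from such monomials; this is exactly the kind of closure argument (substitution of $e^{sB}z$, products, variation of constants) that the paper carries out inside the proof of Lemma \ref{le6} for the operator $T^r$ and the terms $P_r$, and it must be spelled out, not asserted in one line. Second, your final step is muddled as written: comparing degree-$k$ terms of the conjugacy gives $\psi_k(Az)-A\psi_k(z)=g_k(z)-g'_k(z)$, not $\mathcal L_k\psi_k=0$ outright; the correct argument is to observe that $\mathcal L_k$ preserves both the span of resonant-for-$A$ monomials and the span of nonresonant-for-$A$ monomials (because $A$, in Jordan form, mixes only coordinates attached to a common eigenvalue), so the nonresonant projection of the equation reads $\mathcal L_k\psi_k=0$ with $\mathcal L_k$ invertible there, forcing $\psi_k=0$. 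With that observation your worry about monomials ``weakly resonant for $\mathcal X$'' evaporates: such monomials satisfy $\lambda^m=\lambda_j$, hence were excluded from $\psi$ by construction, and the ``parity/dimension check'' you invoke is not needed (nor is it, as stated, an argument). Once these two points are written out, your proof is correct; the paper's proof remains the more economical one, since it extracts the resonance of $v$ directly from the embedding equation without ever normalizing the vector field or controlling its time-one map.
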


\begin{proof}
It is well-known that if ${\mathcal X}(y)$ is an embedding vector
field of the diffeomorphism $ G(y)$ in $(\mathbb R^n,0)$ and
${{\varphi}_t(y)}$ is the corresponding embedding flow, then
${\mathcal X}({\varphi}_t(y))=D({\varphi}_t(y)){\mathcal X}(y)$
for $y\in (\mathbb R^n,0)$ and $t\in \mathbb R$, where
$D{\psi}(y)$ denotes the Jacobian matrix of a differentiable map
${\psi}$ at $y$. This implies that
\begin{equation} \label{e3}
{\mathcal X}(G(y))=D(G(y)){\mathcal X}(y),
\end{equation}
because of $G(y)=\varphi_1(y)$ by the assumption. Equation \eqref{e3}
is called the {\it embedding equation} of $G(y)$.

We should say that the vector field $\mathcal X(y)$ satisfying the
embedding equation \eqref{e3} is not necessary an embedding vector
field of $G(y)$. But the set of solutions $\mathcal X(y)$ of
\eqref{e3} provides the only possible candidate embedding vector
fields of the diffeomorphism $G(y)$.

By the assumption of the lemma, the diffeomorphism
${G}(y)$ has an embedding autonomous vector field, we denote it by ${\mathcal X}(y)={B}y+{v}(y)$.
Then it is necessary
that $B=\log A$. Furthermore we get from equation \eqref{e3} that
$v(y)$ satisfies
\begin{equation} \label{e4}
{v}({A}y+{g}(y))-{Av}(y)=D{g}(y){v}(y)+D{g}(y){ B}y-Bg(y).
\end{equation}

Let
\[
g(y)\sim\sum\limits_{j=r}\limits^{\infty}g_j(y),\quad
Dg(y){B}y-Bg(y)\sim-\sum\limits_{j=r}\limits^{\infty}b_j(y)\,\,\,\mbox{
 and }\,\,\,  v(y)\sim\sum\limits_{j=r}\limits^{\infty}v_j(y),
\]
with $r\ge 2$, be the Taylor expansion of the given functions,
where $g_j(y)$, $b_j(y)$ and $v_j(y)$ are the vector--valued
homogeneous polynomials of degree $j$. Equating the homogeneous
polynomials of the same degree of equation \eqref{e4} yields that
$v_j(y)$, $j=r,r+1,\ldots$, should satisfy
\begin{eqnarray}
\mathcal Lv_r(y)=Av_r(y)-v_r(Ay)&=&b_r(y),\label{e5}\\
\mathcal
Lv_k(y)=Av_k(y)-v_k(Ay)&=&-\sum\limits_{j=r}\limits^{k-r+1}Dg_{k+1-j}(y)v_{j}(y)
+ h_k(y)+b_k(y),\label{e6}
\end{eqnarray}
for $k=r+1,r+2,\ldots$, where $h_k$ are the homogeneous
polynomials of degree $k$ in the expansions of $\sum\limits_{r\le
j<k} v_j(Ay+g(y))$ in $y$, and $g_s=0$ if $s<r$. We next prove
that $v_k(y)$, $k\ge r$, must be resonant in the eigenvalues of $A$.

Since $A=(a_{ij})$ is in the lower triangular Jordan normal form, we
assume without loss of generality that $a_{ii}=\lambda_i$ the
eigenvalues of $A$, $a_{i,i-1}=\tau_i=0$ or $1$ where if
$\tau_i=1$ then $\lambda_{i-1}=\lambda_i$  and the other entries
are equal to zero. Then we have
$Ay+g(y)=(\la_1y_1+g_1(y),\la_2y_2+\tau_1y_1+g_2(y),\ldots,\la_ny_n+\tau_{n-1}y_{n-1}+g_n(y))$,
where $g_i(y)$ is the $i^{th}$ component of $g(y)$.

We claim that if $g(y)$ has only resonant terms, then so is the
function $Dg(y){B}y-{Bg}(y)$. Its proof follows from Lemma \ref{le3.1}
(see also Lemma 2.3 of \cite{Zh2}). Especially if $B$ is diagonal in the real Jordan normal form, then
$Dg(y)By-Bg(y)\equiv 0$. Its proof will be provided in the appendix.

This last claim shows that all the vector--valued homogeneous
polynomials $b_j(x)$ for $j=r,r+1,\ldots$ are resonant. So it
follows from Lemma \ref{le3} that the solution $v_r(y)$ of
equation \eqref{e5} should consist of resonant monomials with resonance in the sense of
$A$. Otherwise, corresponding to the nonresonant monomials the operator
$\mathcal L$ is invertible, and so they must vanish.

In what follows we shall prove by induction that the right hand
side of \eqref{e6} consists of the resonant monomials.

By induction we assume that $v_j(x)$  for
$j=r,\ldots,k-1$ are resonant homogeneous
polynomial solutions of degree $j$ of equation \eqref{e6}. Since all monomials $g_i^{({q})}y^{q}$ in $g_i$
are resonant by the assumption, we have ${\la}^{q}=\la_i$. If
$\tau_{i-1}=1$ then $y_{i-1}$ satisfies the same resonant
conditions as those of $y_i$. This shows that all the monomials in the
vector--valued polynomials $h_k$ are resonant. In the sum $\sum$ of 
the right hand side of \eqref{e6} its $i^{th}$ component is the sum
of the polynomials of the form
\begin{equation}\label{rr}
\sum\limits_{s=1}\limits^{n}\frac{\partial g_{k+1-j,i}}{\partial
y_s}(y) v_{j,s}(y).
\end{equation}
Since $g(y)$ contains only resonant terms, it follows from the
induction that the monomials in $\partial g_{k+1-j,i}/\partial
y_s$ are of the form $y^{q}$ modulo the coefficient with $q$
satisfying ${\la}^{q+e_s}=\la_i$, and that the monomials in 
$v_{j,s}$ have the power ${p}$ satisfying ${\la}^{p}=\la_s$. This
implies that the homogeneous polynomial \eqref{rr} contains only
resonant terms. So we have proved that every monomial in the sum on
the right hand side of (\ref{e6}) is resonant, and consequently
all the terms on the right hand side of \eqref{e6} are resonant.

From Lemma \ref{le3} we obtain that the solution $v_k(y)$ of
equation \eqref{e6} with $k>r$ should be a resonant homogeneous
polynomials of degree $k$. Summarizing the above proof we get that
if equation \eqref{e4} has a solution $v(y)$, it should consists
of resonant polynomials, where the resonance is in the sense of the eigenvalues of $A$.

Let $\mu=(\mu_1,\ldots,\mu_n)$ be the $n$--tuple of eigenvalues of $B$. Then we have $\lambda_j=e^{\mu_j}$.
If a monomial $y^me_j$ is resonant in $\lambda$, then $\lambda_j=\lambda^m$. It follows that
$e^{\langle m,\mu\rangle-\mu_j}=1$. Hence we have $\langle m,\mu\rangle-\mu_j=2l\pi\sqrt{-1}$
for some $l\in \mathbb Z$. This implies that the nonlinear part of the vector field $\mathcal X(y)=By+v(y)$ consists
of resonant and weakly resonant monomials. We complete the proof of the lemma.
\end{proof}

\begin{lemma}\label{le6}
Assume that $G(y)=Ay+g(y)$ is a $C^\infty$ locally hyperbolic
diffeomorphism with $g(y)=O(|y|^2)$ and is in the distinguished  normal form in
$(\mathbb F^n,0)$. If $A$ has the logarithm $B$ with its eigenvalues
weakly nonresonant, then $G(y)$ has a $C^\infty$ embedding
flow.
\end{lemma}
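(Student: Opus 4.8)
The plan is to produce the embedding vector field in the form $\mathcal X(y)=By+v(y)$ with $B$ the given logarithm of $A$ and $v(y)=O(|y|^2)$, first as a formal power series and then as a genuine $C^\infty$ vector field. A preliminary remark is that $g$ is resonant not only in the sense of $A$ but also in the sense of $B$: since $G$ is in distinguished normal form, every monomial $y^me_j$ occurring in $g$ satisfies $\lambda^m=\lambda_j$, so $e^{\langle m,\mu\rangle-\mu_j}=1$ and therefore $\langle m,\mu\rangle-\mu_j\in 2\pi\sqrt{-1}\,\mathbb Z$; since $|m|\ge2$ and $\mu=(\mu_1,\ldots,\mu_n)$ is weakly nonresonant, this forces $\langle m,\mu\rangle=\mu_j$. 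It is also known (see the proof of Lemma \ref{le5}) that any embedding vector field must have linear part $B=\log A$, so we may restrict to this form.

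For the formal construction I would use the variation-of-constants identity for the (formal) flow $\phi_t$ of $\mathcal X=By+v$, namely $\phi_1(y)=Ay+\int_0^1 e^{(1-s)B}v(\phi_s(y))\,ds$, and expand it into homogeneous parts after writing $v=\sum_{k\ge2}v_k$. The computational heart of the matter is that on $\mathcal H_n^k(\mathbb F)$ the conjugation operator $h(y)\mapsto e^{-sB}h(e^{sB}y)$ equals $e^{s\mathcal L_B}$, where $\mathcal L_B$ is the operator of Lemma \ref{le3.1}; integrating in $s$, the degree-$k$ component of $\phi_1(y)-Ay$ equals $A\,\Psi(\mathcal L_B)v_k+R_k$, where $\Psi(z)=\int_0^1 e^{sz}\,ds=(e^z-1)/z$ is entire and $R_k$ is a homogeneous polynomial of degree $k$ that depends only on $v_2,\ldots,v_{k-1}$ (in particular $R_2=0$). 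Equating the degree-$k$ component with $g_k$ gives the recursion $v_k=\Psi(\mathcal L_B)^{-1}A^{-1}(g_k-R_k)$, which solves uniquely as soon as $\Psi(\mathcal L_B)$ is invertible on every $\mathcal H_n^k(\mathbb F)$, $k\ge2$.

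This invertibility is where the weak nonresonance hypothesis is used, and it is the step I expect to be the crux. By Lemma \ref{le3.1} the spectrum of $\mathcal L_B$ on $\mathcal H_n^k(\mathbb F)$ is $\{\langle m,\mu\rangle-\mu_j:\,|m|=k\}$, while $\Psi(\nu)=0$ exactly when $e^\nu=1$ and $\nu\ne0$, i.e. $\nu\in 2\pi\sqrt{-1}(\mathbb Z\setminus\{0\})$; hence $\Psi(\mathcal L_B)$ is singular on some $\mathcal H_n^k$ precisely when $\mu$ admits a weak resonance of order $k$. Since $\mu$ is weakly nonresonant, $\Psi(\mathcal L_B)$ is invertible for every $k\ge2$, and the recursion produces a (unique) formal vector field $By+v(y)$ whose formal time-one map is $G$. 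One also checks inductively, as in the proof of Lemma \ref{le5}, that each $v_k$ is resonant in the sense of $B$.

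Finally I would pass from the formal series to the $C^\infty$ category. By Lemma \ref{lerr} there is a $C^\infty$ map $\widetilde v(y)=O(|y|^2)$ whose Taylor series at the origin is $\sum_{k\ge2}v_k$; put $\widetilde{\mathcal X}=By+\widetilde v$, a $C^\infty$ vector field with hyperbolic linear part $B$, and let $\widetilde G$ be its time-one map, a $C^\infty$ local diffeomorphism with hyperbolic linear part $A$. Since the Taylor series of a time-one map is determined algebraically by the Taylor series of the vector field, $\widetilde G$ and $G$ have the same Taylor series at the origin, hence are formally conjugate there; by Theorem \ref{le2} they are $C^\infty$ conjugate, say $J\circ\widetilde G=G\circ J$ with $J$ a $C^\infty$ diffeomorphism. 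As $\widetilde{\mathcal X}$ is a $C^\infty$ embedding vector field of $\widetilde G$, Lemma \ref{le1} shows that $J_*\widetilde{\mathcal X}$ is a $C^\infty$ embedding vector field of $G$, completing the proof. In summary, once one recognizes the obstruction operator as $\Psi(\mathcal L_B)=(e^{\mathcal L_B}-I)/\mathcal L_B$ and that its kernel consists exactly of the weakly resonant monomials, the formal step is a routine recursion and the passage to $C^\infty$ is the standard Borel--Chen argument.
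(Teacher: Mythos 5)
Your proof is correct, but the key step is organized differently from the paper's. Both arguments reduce to solving, degree by degree, an equation whose left-hand side is the operator $X_r\mapsto\int_0^1e^{-sB}X_r(e^{sB}y)\,ds$ acting on homogeneous vector polynomials; the difference is how invertibility is obtained. The paper first invokes Lemma \ref{le5} (plus weak nonresonance) to know a priori that any solution must lie in the resonant subspace $\mathcal R^r$, then verifies inductively that the right-hand side of its equation \eqref{e10} lies in $\mathcal R^r$, and finally proves invertibility of the restricted operator $T^r$ by putting $B$ in lower triangular Jordan form and showing that, in the monomial basis ordered lexicographically, $T^r$ is lower triangular with unit diagonal. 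You instead work on all of $\mathcal H_n^k(\mathbb F)$: you identify the conjugation semigroup $h\mapsto e^{-sB}h(e^{sB}y)$ with $e^{s\mathcal L_B}$, so the obstruction operator is $\Psi(\mathcal L_B)$ with $\Psi(z)=(e^z-1)/z$, and then Lemma \ref{le3.1} together with the spectral mapping theorem shows $\Psi(\mathcal L_B)$ is singular exactly when some $\langle m,\mu\rangle-\mu_j$ lies in $2\pi\sqrt{-1}(\mathbb Z\setminus\{0\})$, i.e.\ exactly when there is a weak resonance; weak nonresonance thus gives invertibility at every degree. This buys a shorter formal step that needs neither Lemma \ref{le5} nor a triangular normalization of $B$, and it makes completely transparent that weak nonresonance is precisely the hypothesis being used (the point the paper only explains in the remark closing Subsection \ref{s22}); the paper's route, by contrast, keeps explicit track of the resonant structure of the embedding field, which is the extra information highlighted there. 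Your final passage to the $C^\infty$ category (Borel realization via Lemma \ref{lerr}, equality of $\infty$-jets of the two time-one maps, Chen's Theorem \ref{le2}, then Lemma \ref{le1}) coincides with the paper's.
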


\begin{proof} Let ${\mu}=(\mu_1,\ldots,\mu_n)$ be the $n$--tuple of eigenvalues
of $B$. Since $B$ is a logarithm of $A$ and it is in the lower
triangular Jordan normal form, if ${\la}=(\la_1,\ldots,\la_n)$ are
the $n$--tuple of eigenvalues of $A$, then $\la_j=e^{\mu_j}$ for
$j=1,\ldots,n$. If $\la$ are resonant, i.e. there exists some
$j\in\{1,\ldots,n\}$ such that $\la_j={\la}^m$ with $m\in\mathbb
Z_+^n$ and $|m|\ge 2$, then $\mu$ are either resonant or weakly resonant. In the latter, there exists $k\in\mathbb Z$ such that
$\mu_j-\sum\limits_{i=1}\limits^{n} m_i\mu_i=2k\pi\sqrt{-1}$.

Assume that $\mathcal X(y)$ is an embedding vector field of $G(y)$. Taking an
invertible linear change of the coordinates under which $\mathcal X(y)$ is
transformed to a new one with its linear part in the lower triangular
Jordan normal form (if necessary we may use the conjugate complex
coordinates instead of the pairs of real coordinates, which correspond to the conjugate complex eigenvalues). For
simplifying the notations we also use $G(y)$ and $\mathcal X(y)$ to
denote them in the new coordinates, respectively.

Since $G(y)$ is in the distinguished normal form, it follows from Lemma \ref{le5} that $\mathcal X(y)$
consists of resonant and weakly resonant monomials. By the assumption of the lemma
that the eigenvalues of the matrix $B$ is weakly nonresonant, it follows that $\mathcal X(y)$
consists of only resonant monomials.

Let $\varphi (t,y)$ be the flow of $\mathcal X(y)$. Then we
have
\begin{equation}\label{e7}
\dot \varphi(t,y)=\mathcal X(\varphi(t,y)),\quad \varphi(0,y)=y.
\end{equation}
Now we prove the existence of $\mathcal X(y)$ whose flow
$\varphi(t,y)$ satisfies $\varphi(1,y)=G(y)$. Part of the ideas
follows from \cite{CLW} on the proof of the Takens' theorem, where
they obtained the existence of a finite jet of an embedding flow
for a finite jet of a diffeomorphism. Taking the Taylor expansions
of $\mathcal X(y)$ and of $\varphi(t,y)$ as
\[
\mathcal X\sim  By+X_2(y)+X_3(y)+\ldots \mbox{ and } \varphi(t,y)\sim
\varphi_1(t,y)+\varphi_2(t,y)+\varphi_3(t,y)+\ldots.
\]
Then we get from equation
\eqref{e7} that
\begin{eqnarray}\label{e8}
&& \dot\varphi_1(t,y)=B\varphi_1(t,y),\quad\hspace{45mm} \varphi_1(0,y)=y,\\
&&\dot\varphi_k(t,y)=B\varphi_k(t,y)+X_k(\varphi_1(t,y))+P_k(t,y),\quad
\varphi_k(0,y)=0,\label{e9}
\end{eqnarray}
for $k=2,3,\ldots$, where $\varphi_k$ and $X_k$ are unknown, $P_2=0$ and $P_k$ for $k>2$ are
inductively known vector--valued homogeneous polynomials of degree $k$ in $y$ and
it is a polynomial in $\varphi_2,\ldots,\varphi_{k-1}$ and
$X_2,\ldots,X_{k-1}$. In fact, $P_k$ is obtained from the expansion of $X_2(\phi)+
\ldots+X_{k-1}(\phi)$.

Equation \eqref{e8} has the solution $\varphi_1(t,y)=e^{tB}y$. Its
time one map is $e^By=Ay$ by the assumption. Now we shall prove by
induction that for $k=2,3,\ldots,$ equation \eqref{e9} has a
homogeneous polynomial solution of degree $k$ in $y$ whose time
one map is $g_r(y)$, where $g_r(y)$ is the homogeneous polynomial of degree
$r$ in the Taylor expansion of $G(y)$.

Be induction we assume that for $k=2,\ldots,r-1$, system \eqref{e9} has a
homogeneous polynomial solution $\varphi_k(t,y)$ of degree $k$ in
$y$ satisfying $\varphi_k(1,y)=g_k(y)$. For $k=r$, $P_k(t,y)$ is
known, $\varphi_k(t,y)$ and $X_k(y)$ are both unknown. By the
variation of constants formula, we get from \eqref{e9} with $k=r$
that
\[
\varphi_r(t,y)=e^{tB}\int\limits_0\limits^te^{-sB}(X_r(e^{sB}y)+P_r(s,y))ds.
\]
In order that $\varphi(t,y)$ is the embedding flow of $G(y)$, we should
have $\varphi(1,y)=G(y)$. This means that $\varphi_r(1,y)=g_r(y)$, so
we should have
\begin{equation}\label{e10}
\int\limits_0\limits^1e^{-sB}X_r\left(e^{sB}y\right)ds=e^{-B}g_r(y)-\int\limits_0\limits^1e^{-sB}P_r(s,y)ds.
\end{equation}

Define $\mathcal R^r$ to be the subspace of $\mathcal
H_n^r(\mathbb F)$ which consists of the vector--valued resonant
homogeneous polynomials of degree $r$, where resonance is in the sense of eigenvalues of
$B$. From Lemma \ref{le5} we
know that if the solution of equation \eqref{e10} exists, it
should be in $\mathcal R^r$. For proving the existence of the
solution in $\mathcal R^r$ of equation \eqref{e10}, we define the
operator
\begin{eqnarray}\label{rrr}
T^r:\, \mathcal R^r & \longrightarrow & \mathcal R^r\\
X_r&\longrightarrow & \int\limits_0\limits^1e^{-sB}X_r(e^{sB}y)ds.\nonumber
\end{eqnarray}
Obviously $T^r$ is linear in $\mathcal R^r$. Firstly we claim that $T^r(X_r)\in\mathcal R^r$ for each
$X_r\in\mathcal R^r$. Indeed, since $B$ is in the Jordan normal form and
$B=\mbox{diag}(J_1,\ldots,J_m)$ with $J_i$ the $n_i$--th lower triangular Jordan block.
Set $z_j=(y_{n_{j-1}+1},\ldots,y_{n_j})$ for $j=1,\ldots,m$, where $n_0=0$.
Then
$
e^{sB}y=\left(e^{sJ_1}z_1, \ldots, e^{sJ_m}z_m\right)^T,
$ where $T$ denotes the transpose of a matrix.
For each $z_j$, $j=1,\ldots,m$, its all components correspond to the same resonant condition.
So if $X_r(x)$ is resonant as a function of $x$ then $X_r(e^{sB}y)$ is resonant
as a function of $y$. Moreover, using the block diagonal form of $e^{-sB}$ we get
that $T^r(X_r(y))$ consists of the resonant monomials. This proves the claim.

Secondly we claim that $T^r$ is invertible in $\mathcal R^r$.
For proving this claim, we need to compute the expression $T^r(y^m e_j)$
for each base element $y^m e_j$ of $\mathcal
R^r$ with $j=1,\ldots,n$, $m\in\mathbb Z_+^n$ and $|m|=r$. Recall that $e_j$ is the $j^{th}$ unit vector
with its $j^{th}$ entry being equal to $1$ and the others vanishing.

Since $B$ is in the lower triangular Jordan normal form, we assume
without loss of generality that $B=S+N$ and $SN=NS$, where $S$ is
semisimple, and $N$ is nilpotent and is in the lower triangular normal form.
Furthermore we can assume that $S$ is in the diagonal form, i.e.
$S=\mbox{diag}(\mu_1,\ldots,\mu_n)$.

For $X_r(y)=y^m e_j$ with $|m|=r$, since
\[
\left(e^{sB}y\right)^m e_j=\left(e^{sS}e^{sN}y\right)^m e_j
\quad \mbox{ and } \quad e^{sN}=\left(\begin{array}{ccc}1 & & 0 \\ & \ddots &\\
* & & 1\end{array}\right),
\]
where $*$ are the entries consisting of polynomials in $s$, we
have
\[
\left(e^{sB}y\right)^m e_j=e^{\langle m,\mu\rangle s}y^m
e_j+q_r(s,y)e_j,
\]
where $q_r(s,y)e_j$ is a polynomial consisting of monomials which
are before $y^m e_j$ in the lexicographic ordering. Recall that a
monomial $y^\sigma$ is before $y^\alpha$ in the lexicographic
ordering if there exists an $l$ with $1\le l\le n$ such that
$\sigma_j=\alpha_j$ for $0\le j<l$ and $\sigma_l>\alpha_l$. It
follows that for $X_r(y)=y^me_j$  with $|m|=r$
\begin{equation}\label{rrr1}
e^{-sB}X_r\left(e^{sB}y\right)=e^{-sB}\left(e^{sB}y\right)^m
e_j=e^{(\langle m,\mu\rangle-\mu_j)s}y^m e_j+Q_r(s,y)e_j+\sum\limits_{k=j+1}\limits^nP_k(s,y)e_k,
\end{equation}
where $Q_r(s,y)$ is a homogeneous polynomial in $y$ of degree $r$
and is before $e^{(\langle m,\mu\rangle-\mu_j)s}y^m$, and $P_k(s,y)$ are also homogeneous polynomials in $y$ of degree $r$ for $k=j+1,\ldots,n$. Since $y^m e_j$ is a resonant monomial, we have $\langle
m,\mu\rangle-\mu_j=0$. This shows that
\[
T^r(y^m e_j)= \int\limits_0\limits^1e^{-sB}(e^{sB}y)^m e_jds=y^m
e_j+ \int\limits_0\limits^1Q_r(s,y)dse_j+\sum\limits_{k=j+1}\limits^n\int\limits_0\limits^1 P_k(s,y)dse_k\ne 0.
\]
This last expression implies that the matrix expression of $T^r$ under the basis
$\{y^me_j;\, m\in\mathbb Z_+^n, |m|=r,j=1,\ldots,n\}$
is a lower triangular matrix and its diagonal entries are all equal to $1$.
So $T^r$ is invertible on $\mathcal R^r$. The claim follows.

These last two claims mean that $T^r$ is an invertible linear operator
in $\mathcal R^r$. Furthermore, by induction and working in a similar way
to the proof of $T^r(X_r)$ belonging to $\mathcal R^r$ we can prove
that $P_r(s,y)$ is resonant as a function of $y$. So the right hand side of
\eqref{e10} belongs to $\mathcal R^r$. This shows that equation \eqref{e10} has a unique
resonant homogeneous polynomial solution $X_r$ in $\mathcal R^r$.

By induction we have proved that each equation in \eqref{e9} for
$k=2,3,\ldots$ has a unique resonant homogeneous polynomial
solution $X_k$ in $\mathcal R^k$. Consequently, the given
$C^\infty$ diffeomorphism $G(y)$ has a unique (maybe formal)
autonomous embedding vector field $\mathcal
X(y)=By+\sum\limits_{k=2}\limits^{\infty}X_r(y)$.

By Lemma \ref{lerr} there exists a $C^\infty$ vector field $\mathcal
Y(y)$ such that $jet_{k=1}^{\infty}\mathcal
Y(y)=By+\sum\limits_{k=2}\limits^{\infty}X_r(y)$. Consider the
$C^\infty$ vector field $\mathcal Y(y)$, and denote by $\psi(y)$
its time one map of the flow associated with $\mathcal Y(y)$. Then
we have $jet_{k=1}^{\infty}\psi(y)=jet_{k=1}^{\infty}G(y)$, and
consequently they are formally conjugate. Since $\psi(y)$ and
$G(y)$ are hyperbolic at $y=0$, it follows from Theorem \ref{le2}
that $\psi(y)$ and $G(y)$ are $C^\infty$ conjugate. Since
$\mathcal Y(y)$ is the embedding vector field of $\psi(y)$, we get
from Lemma \ref{le1} that $G(y)$ has a $C^\infty$ embedding vector
field. This proves the lemma. \end{proof}

\noindent{\it Proof of Theorem }\ref{th1}: Its proof follows from
Lemmas \ref{le4}, \ref{le6} and \ref{le1}. More details, Lemma
\ref{le4} shows that the given $C^\infty$ hyperbolic
diffeomorphism $F(x)$ is $C^\infty$ conjugate to its distinguished
normal form, denote by $G(y)$. By Lemma \ref{le6} the
diffeomorphism $G(y)$ has a $C^\infty$ embedding autonomous  vector
field. Finally it follows from Lemma \ref{le1} that $F(x)$ has a
$C^\infty$ embedding autonomous vector field, and consequently has
a $C^\infty$ embedding flow. We complete the proof of the theorem.
$\bbox$

We remark that in the proof of Lemma \ref{le6}, if $\mathcal R^r$ consists of both resonant and weakly
resonant vector--valued monomials of $\mathcal H_n^r(\mathbb F)$, then the linear operator
$T^r$ defined in \eqref{rrr} is not invertible. Because for those weakly resonant monomials
$y^me_j$ we get from \eqref{rrr1} that
$
T^r(y^m e_j)=\int\limits_0\limits^1Q_r(s,y)e_j+\sum\limits_{k=j+1}\limits^n\int\limits_0\limits^1 P_k(s,y)dse_k$, where $Q_r(s,y)$
consists of homogeneous polynomials in $y$ before $y^m $ and $P_k(s,y)$ are also polynomials in $y$. This implies that the
matrix expression of $T^r$
under the basis $\{y^me_j;\, m\in\mathbb Z_+^n,|m|=r,j=1,\ldots,n\}$ is in the lower triangular form
and its diagonal entries have both $1$ and $0$. This explains from a new point
the importance of the weakly nonresonant condition for ensuring the existence of smooth embedding flows
for a smooth diffeomorphism.

\subsection{\normalsize Proof of Theorem \ref{ex1}}\label{s23}

For proving the theorem we provide a class of $C^\infty$ locally hyperbolic
diffeomorphisms with their linear part having a real logarithm  whose eigenvalues
are weakly resonant, but they do not have a $C^\infty$ embedding autonomous
vector field.

Consider a $C^\infty$ or an analytic real local diffeomorphism of the form
\begin{equation}\label{eex}
F(x)=Ax+f(x),\qquad x\in (\mathbb R^n,0),
\end{equation}
with $f(x)=o(x)$ and $A=\mbox{diag}(A_1,A_2)$, where
\[
A_1=\left(\begin{array}{ccc}e^8 & 0 & 0\\
0 & \frac{\sqrt{2}}{2}e & \frac{\sqrt{2}}{2}e\\ 0 & -\frac{\sqrt{2}}{2}e & \frac{\sqrt{2}}{2}e
\end{array}\right),
\]
and $A_2=\mbox{diag}(\lambda_4,\ldots,\lambda_n)$ with its eigenvalues
$\lambda_i>1$ for $i=4,\ldots,n$. We assume that $\lambda_4,\ldots,\lambda_n$ are nonresonant, and that they are not
resonant with the eigenvalues of $A_1$. The matrix $A_2$ with the prescribed
property can be easily illustrated.
Obviously, $A_1$ has the eigenvalues $(\lambda_1,\lambda_2,\lambda_3)=(e^8,e^{1+\frac{\pi}{4}\sqrt{-1}},
e^{1-\frac{\pi}{4}\sqrt{-1}})$. From the choice of $A_2$ we know that the eigenvalues of $A$ satisfy
only the resonant relations $\lambda_1=\lambda_2^4\lambda_3^4$, $\lambda_1=\lambda_2^8$ and $\lambda_1=\lambda_3^8$.

From the selection of $A$ and Proposition \ref{po1}, the matrix $A$ has a real logarithm $B=\mbox{diag}(B_1,B_2)$
with
\[
B_1=\left(\begin{array}{ccc}8 & 0 & 0\\
0 & 1 & \frac{\pi}{4}\\ 0 & -\frac{\pi}{4} & 1
\end{array}\right),
\]
and $B_2=\mbox{diag}(\ln\lambda_3,\ldots,\ln\lambda_n)$. Moreover the resonant
and weakly resonant relations associated with the eigenvalues of $B$ are only
\[
\mu_1=4\mu_2+4\mu_3,\quad \quad \mu_1-8\mu_2=-2\pi\sqrt{-1} \quad \mbox{and }
\quad \mu_1-8\mu_3=2\pi\sqrt{-1},
\]
where
$(\mu_1,\mu_2,\mu_3)=(8, 1+\frac{\pi}{4}\sqrt{-1},1-\frac{\pi}{4}\sqrt{-1})$ are the eigenvalues of $B_1$.

We can check easily that the eigenvalues of $A$ belong to the Poincar\'e domain and that
$F(x)$ is locally hyperbolic.
Instead of $x_2,x_3$ we use the conjugate complex
coordinates, and also denote them by $x_2,x_3$. Then by Lemma \ref{le4} (resp. Theorem \ref{le7})
we get that if the diffeomorphism $F(x)$ is $C^\infty$ (resp. analytic),
it is $C^\infty$ (resp. analytically) conjugate to the local diffeomorphism
of the form
\[
G(x)=\left(e^8x_1+ax_2^4x_3^4+b x_2^8+d x_3^8,\,e^{1+\frac{\pi}{4}\sqrt{-1}}x_2,\,
e^{1-\frac{\pi}{4}\sqrt{-1}}x_3,\,\lambda_4 x_4,\,\ldots,\,\lambda_nx_n\right),
\]
with $a\in\mathbb R$ and $\overline b=d\in \mathbb C$, where $\overline b=d$ follows form the fact that written in real coordinates
the first component of $G(x)$ should be real.
Hence by Lemma \ref{le1}, $F(x)$ has a $C^\infty$ (resp. analytic)
embedding autonomous vector field if and only if $G(x)$ has a $C^\infty$ (resp. analytic)
embedding autonomous vector field. So we turn to study the existence of embedding autonomous
vector field for the local diffeomorphism $G(x)$.

Since $G(x)$ contains only resonant nonlinear terms, it follows from Lemma \ref{le5}
that if the embedding autonomous vector field of $G(x)$ exists,
it should be of the form
\[
\mathcal X(x)=\left(8x_1,(1+\frac{\pi}{4}\sqrt{-1})x_2, (1-\frac{\pi}{4}\sqrt{-1})x_3,
\ln\lambda_4x_4,\ldots,\ln\lambda_nx_n\right)+g(x),
\]
with $g(x)$ consisting of resonant and weakly resonant monomials.
So the candidate $g(x)$ is only possible of the form
\[
g(x)=\left(Ax_2^4x_3^4+Bx_2^8+Dx_3^8,0,0,0,\ldots,0\right).
\]

Some easy computations show that the flow of $\mathcal X(x)$ is
\[
\phi_t(x)=\left(\begin{array}{c}
e^{8t}\left[x_1+Ax_2^4x_3^4t+\frac{B}{2\pi\sqrt{-1}}\left(e^{2\pi\sqrt{-1}t}-1\right)x_2^8-
\frac{D}{2\pi\sqrt{-1}}\left(e^{-2\pi\sqrt{-1}t}-1\right)x_3^8\right]\\
e^{(1+\frac{\pi}{4}\sqrt{-1})t}x_2\\
e^{(1-\frac{\pi}{4}\sqrt{-1})t}x_3\\
\lambda_4^tx_4\\
\vdots\\
\lambda_n^tx_n
\end{array}
\right).
\]
Clearly, the time one map of $\phi_t$ is $\left(
e^{8}(x_1+Ax_2^4x_3^4), e^{(1+\frac{\pi}{4}\sqrt{-1})}x_2, e^{(1-\frac{\pi}{4}\sqrt{-1})}x_3,\lambda_4x_4,
\ldots,\lambda_nx_n\right)$. So,
if $\overline b=d\ne 0$, the diffeomorphism $G(x)$ cannot have a $C^\infty$ or an analytic embedding autonomous vector field.
Consequently $F(x)$ cannot have a $C^\infty$ or an analytic embedding autonomous vector field. This proves the
theorem.

\subsection{\normalsize Proof of Theorem \ref{co1}}\label{s24}

$(a)$ Necessity. By the assumption $F(x)$ has an embedding autonomous vector field, denoted by
$\mathcal X(x)=Bx+v(x)$ with $v(x)$ the higher order terms. Then $B$ is the real logarithm of $A$. It follows from Proposition
\ref{po1} that either $A$ has no negative eigenvalues or the Jordan blocks corresponding to the negative eigenvalues
of $A$ appear pairwise. Since we are in the two dimensional space,  the later case means that the Jordan normal form should be of
the form $\mbox{diag}(-\lambda,-\lambda)$ with $\lambda>0$. This proves the necessary part.

\noindent Sufficiency. If $A$ has no negative eigenvalues, then $A$ has the Jordan normal form of the type either
\[
J_1=\left(\begin{array}{cc}\lambda_1 & 0\\ 0 & \lambda_2 \end{array}\right),\quad \mbox{or }\quad
J_2=\left(\begin{array}{cc}\lambda_1 & 0\\ 1 & \lambda_1 \end{array}\right),
\quad \mbox{or }\quad
J_3=\left(\begin{array}{cc}\alpha & \beta\\ -\beta & \alpha \end{array}\right),
\]
with $\lambda_1,\lambda_2>0$ and $\alpha\beta\ne 0$. They have respectively the real logarithms
\[
\ln J_1=\left(\begin{array}{cc} \ln \lambda_1 & 0 \\ 0 & \ln\lambda_2\end{array}\right),\quad
\ln J_2=\left(\begin{array}{cc} \ln \lambda_1 & 0 \\ \lambda_1^{-1} & \ln\lambda_1\end{array}\right),\]
\[
\ln J_3=\left(\begin{array}{cc} \frac 12\ln (\alpha^2+\beta^2) & \arccos\frac{\alpha}{\sqrt{\alpha^2+
\beta^2}} \\ -\arccos\frac{\alpha}{\sqrt{\alpha^2+\beta^2}} & \frac 12\ln (\alpha^2+\beta^2)\end{array}\right).
\]
In any one of the above three cases the eigenvalues are not possible weakly resonant.

If $A$ has negative eigenvalues, by the assumption the Jordan normal form of $A$
can only be of the form $J_4=\mbox{diag}(-\lambda,-\lambda)$ with $\lambda>0$. It is
easy to check that $J_4$ has the real logarithm
\[
\ln J_4= \left(\begin{array}{cc} \ln \lambda & \pi \\ -\pi & \ln\lambda\end{array}\right).
\]
Obviously, its eigenvalues are not weakly resonant.

The above proof shows that under the assumption of the sufficiency the matrix $A$ has a
real logarithm with its eigenvalues weakly nonresonant. So it follows from Theorem \ref{th1}
that the diffeomorphism $F(x)$ has a local $C^\infty$ embedding flow. This proves the statement.

$(b)$ For the diffeomorphism $F(x)=Ax+f(x)$ with $A$ having only
positive eigenvalues,  it follows from Proposition \ref{po1} that $A$ has a real logarithm.
We claim that the real logarithm can be chosen such that its eigenvalues are weakly
nonresonant. Then statement $(b)$ follows from Theorem \ref{th1}.

We now prove the claim. Let $J$ be the Jordan normal form
of $A$ and $T$ be the nonsingular real matrix such that $A=TJT^{-1}$.
By the assumption of the theorem we can assume that
\[
J=\mbox{diag}(C_1,\ldots,C_p),
\]
with
\[
C_i=\gamma_iE_{n_i}+N_{n_i},
\quad i=1,\ldots,n,
\]
where $n_1+\ldots+n_p=n$ and $E_{n_i}$ is the $n_i$--th unit matrix,
and
\[
N_{n_i}=\left(\begin{array}{cccc}
0 & & & \\
1 & 0 & & \\
 & \ddots & \ddots & \\
  & & 1 & 0  \end{array}\right).
\]
Clearly, $C_i$ has the $n_i$--tuple of eigenvalues $(\gamma_i,\ldots,\gamma_i)$ with $\gamma_i>0$ by the assumption.

We know that $C_i$ has the real logarithm
\[
\ln C_i=(\ln \gamma_i)E_{n_i}+\sum\limits_{k=1}\limits^{\infty}(-1)^{k+1}\frac{1}{k}\left(\gamma_i^{-1}N_{n_i}\right)^k,
\]
with the $n_i$--tuple of real eigenvalues $(\ln\gamma_i,\ldots,\ln\gamma_i)$.
Set $B=T(\ln C_1,\ldots,\ln C_p) T^{-1}$. Then
$A=e^B$, i.e. $B$ is the real logarithm of $A$.

Since the eigenvalues of $B$ are all real, it is not possible weakly resonant. This proves the claim, and
consequently the statement $(b )$. We complete the proof of
the theorem.

\section{Appendix}

\begin{lemma}\label{app}
Let $A$ be a real square matrix of order $n$. Assume that $A$ has a real logarithm $B$. If $B$ is diagonal in the real Jordan normal form and the $n$ dimensional vector function $g(y)$ contains only resonant terms in the sense of diffeomorphism, then
$Dg(y)By-Bg(y)\equiv 0$.
\end{lemma}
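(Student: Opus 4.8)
The plan is to work directly with the eigenstructure of $B$. Since $B$ is assumed to be diagonal in the real Jordan normal form, after the reduction set up at the start of Subsection \ref{s22} (passing to complex conjugate coordinates where needed) we may assume $B=\mbox{diag}(\mu_1,\ldots,\mu_n)$, where $\mu_j$ are the eigenvalues of $B$, and correspondingly $A=e^B=\mbox{diag}(\lambda_1,\ldots,\lambda_n)$ with $\lambda_j=e^{\mu_j}$. First I would expand $g(y)$ into monomials: writing $g(y)=\sum_j g_j(y)e_j$ and $g_j(y)=\sum_m g_{j,m}y^m$, it suffices to prove the identity on each monomial $y^m e_j$ appearing in $g$, since $Dg(y)By-Bg(y)$ is linear in $g$.

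The key computation is the action of the linear operator $L h(y)=Dh(y)By-Bh(y)$ on a single monomial $y^m e_j$. With $B$ diagonal, $By=(\mu_1 y_1,\ldots,\mu_n y_n)^T$, so $D(y^m e_j)By = \left(\sum_{i=1}^n m_i\mu_i\right)y^m e_j = \langle m,\mu\rangle\, y^m e_j$, while $B(y^m e_j)=\mu_j y^m e_j$. Hence $L(y^m e_j)=(\langle m,\mu\rangle-\mu_j)\,y^m e_j$; this is precisely the eigenvalue of $L$ given in Lemma \ref{le3.1}. The next step is to use the hypothesis that $y^m e_j$ is resonant in the sense of the diffeomorphism $A$, i.e. $\lambda^m=\lambda_j$, equivalently $\lambda_1^{m_1}\cdots\lambda_n^{m_n}=\lambda_j$. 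Taking $\lambda_i=e^{\mu_i}$ this reads $e^{\langle m,\mu\rangle}=e^{\mu_j}$, so $\langle m,\mu\rangle-\mu_j\in 2\pi\sqrt{-1}\,\mathbb Z$.

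Here is the one subtle point, which is the main obstacle: the resonance of the diffeomorphism only gives $\langle m,\mu\rangle-\mu_j=2k\pi\sqrt{-1}$ for some integer $k$, not necessarily $k=0$; a nonzero $k$ would correspond to a merely \emph{weakly} resonant monomial for the vector field and would \emph{not} make $L(y^m e_j)$ vanish. So the crux is to rule out $k\neq0$ under the standing hypothesis that $B$ is \emph{diagonal in the real Jordan normal form}. When $\mu_j$ is real, $\langle m,\mu\rangle$ has imaginary part contributed only by the complex-conjugate pairs among the $\mu_i$; being in real Jordan form with a diagonal structure forces the purely imaginary parts of the eigenvalues to come from $2\times 2$ rotation blocks $\left(\begin{smallmatrix}\alpha_s & \beta_s\\ -\beta_s & \alpha_s\end{smallmatrix}\right)$, and in the associated complex coordinates $z_s,\overline z_s$ the eigenvalues are $\alpha_s\pm\beta_s\sqrt{-1}$. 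The argument is that the real Jordan form structure, together with $A=e^B$ forcing the $\beta_s$ to lie in a bounded range (the principal branch), pins down $\langle m,\mu\rangle-\mu_j$ to be exactly $0$ rather than a nonzero multiple of $2\pi\sqrt{-1}$. I would make this precise by examining, for each monomial, the real and imaginary parts separately: the real part gives $\mbox{Re}\langle m,\mu\rangle=\mbox{Re}\,\mu_j$ directly from $|\lambda^m|=|\lambda_j|$, and the imaginary part, combined with the constraint that the rotation angles $\beta_s$ of a real logarithm can be taken in $(-\pi,\pi]$ (as in the explicit logarithms displayed for $\ln J_3$ in Subsection \ref{s24}), forces the integer $k$ to vanish.

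Once $\langle m,\mu\rangle-\mu_j=0$ is established for every monomial $y^m e_j$ of $g$, we get $L(y^m e_j)=0$ termwise, hence $Dg(y)By-Bg(y)=\sum_{j,m}g_{j,m}L(y^m e_j)=0$, which is the claim. I expect the bookkeeping around the complex-conjugate coordinates and the bounded-angle property of the chosen logarithm to be the only place requiring care; the rest is the short eigenvalue computation above.
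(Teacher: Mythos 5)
Your opening computation is fine and coincides with the paper's: with $B$ diagonal, $L(y^me_j)=\bigl(\langle m,\mu\rangle-\mu_j\bigr)y^me_j$, and by linearity the whole issue is whether diffeomorphism--resonance $\lambda^m=\lambda_j$ forces $\langle m,\mu\rangle-\mu_j=0$ rather than a nonzero element of $2\pi\sqrt{-1}\,\mathbb Z$. You correctly identify this as the crux, but the way you propose to settle it does not work. You read ``$B$ is diagonal in the real Jordan normal form'' as allowing $2\times2$ rotation blocks $\left(\begin{smallmatrix}\alpha_s & \beta_s\\ -\beta_s & \alpha_s\end{smallmatrix}\right)$ and then try to exclude $k\neq0$ by confining the angles $\beta_s$ to the principal branch. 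This fails: first, $A=e^B$ does not force the principal branch ($B$ is an arbitrarily given real logarithm, and the angles are only determined modulo $2\pi$); second, and decisively, resonant monomials have unbounded degree, so even with $\beta_s\in(-\pi,\pi]$ the imaginary part of $\langle m,\mu\rangle-\mu_j$ can be any multiple of $2\pi$. The paper's own construction in the proof of Theorem \ref{ex1} defeats exactly this line of reasoning: there $\mu_1=8$, $\mu_{2,3}=1\pm\frac{\pi}{4}\sqrt{-1}$ (rotation angle $\pi/4$, well inside the principal branch), the monomial $x_2^8e_1$ is resonant for $A$ since $\lambda_1=\lambda_2^8$, yet $\mu_1-8\mu_2=-2\pi\sqrt{-1}\neq0$; taking for $g$ the real resonant function $\mathrm{Re}\,(x_2+\sqrt{-1}\,x_3)^8\,e_1$ gives $Dg(y)By-Bg(y)\not\equiv0$. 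Of course that $B$ is not diagonal — its real Jordan form contains the rotation block — which is precisely the content of the hypothesis you are not using.

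The missing idea is that ``diagonal in the real Jordan normal form'' means $B$ is similar over $\mathbb R$ to a genuinely diagonal real matrix, so \emph{all eigenvalues of $B$ are real} and there are no rotation blocks at all; there is nothing to pin down. This is how the paper argues: the real logarithm of a Jordan block with negative real eigenvalue, and of a complex block $D_m$, always carries a nonzero skew off-diagonal part ($(2k+1)\pi$, respectively $\arccos\frac{\alpha_m}{\sqrt{\alpha_m^2+\beta_m^2}}+2l\pi$), so such blocks cannot occur in $A$ if $B$ is to be diagonal; hence $A\sim\mathrm{diag}(\lambda_1,\ldots,\lambda_n)$ with $\lambda_j>0$ and one may take $B=\mathrm{diag}(\ln\lambda_1,\ldots,\ln\lambda_n)$. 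Once the $\mu_j=\ln\lambda_j$ are real, $\lambda^m=\lambda_j$ gives $\langle m,\mu\rangle-\mu_j=\ln\bigl(\lambda^m/\lambda_j\bigr)=0$ exactly (a real number whose exponential is $1$ must vanish), and your termwise computation then finishes the proof. So to repair your argument, replace the bounded-angle discussion by the observation that diagonality of $B$ as a real matrix forces its spectrum to be real, which removes the $2\pi\sqrt{-1}\,k$ ambiguity altogether.
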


\begin{proof}
In the real Jordan normal form we have $A=\mbox{diag}(A_1,\ldots, A_r, B_1,\ldots,B_s,C_1,\ldots,C_k)$ with
\[
A_m=\left(\begin{array}{cccc}\lambda_m & & & \\1 & \lambda_m & & \\ & \ddots &\ddots &\\ & & 1 & \lambda_m\end{array}\right),\,\, \lambda_m>0;\quad
B_m=\left(\begin{array}{cccc}\mu_m & & & \\1 & \mu_m & & \\ & \ddots &\ddots &\\ & & 1 & \mu_m\end{array}\right),\,\, \mu_m<0,
\]
\[
C_m=\left(\begin{array}{cccc}D_m & & & \\E_2 & D_m & & \\ & \ddots &\ddots &\\ & & E_2 & D_m\end{array}\right), D_m=\left(\begin{array}{cc}\alpha_m & \beta_m \\-\beta_m & \alpha_m\end{array}\right),
E_2=\left(\begin{array}{cc}1 & 0 \\ 0 & 1\end{array}\right).
\]
Then $B=\ln A=\mbox{diag}(\ln A_1,\ldots, \ln A_r, \ln B_1,\ldots,\ln B_s,\ln C_1,\ldots,\ln C_k)$. By the assumption that $A$ has a real logarithm, $B_m$ should appear  pairwise. Let $M_m=\mbox{diag}(B_m,B_m)$, then $M_m$ is similar to $\Lambda+Z$ with $\Lambda=\mbox{diag}(\mu_mE_2,\ldots,\mu_mE_2)$ and
\[
Z=\left(\begin{array}{cccc}\bf 0 & & & \\E_2 & \bf 0 & & \\ & \ddots &\ddots &\\ & & E_2 & \bf 0\end{array}\right).
\]
We know that
\[
\ln (\mu_mE_n)=\left(\begin{array}{cc}\ln|\mu_m| & (2k+1)\pi \\ -(2k+1)\pi & \ln|\mu_m|\end{array}\right),\quad k\in\mathbb Z.
\]
In addition,
\[
\ln D_m=\left(\begin{array}{cc}\displaystyle\frac 12\ln(\alpha_m^2+\beta_m^2) & \arccos\frac{\alpha_m}{\sqrt{\alpha_m^2+\beta_m^2}}+2l\pi \\ - \arccos\frac{\alpha_m}{\sqrt{\alpha_m^2+\beta_m^2}}-2l\pi &\displaystyle \frac 12\ln(\alpha_m^2+\beta_m^2) \end{array}\right),\quad l\in\mathbb Z.
\]
So in order that $B=\ln A$ is diagonal in the real Jordan normal form, the necessary condition is that the blocks $B_m$ for $m\in\{1,\ldots,s\}$ and $C_m$  for $m\in\{1,\ldots,k\}$ do not appear in $A$.
These show that $A$ is similar to $\mbox{diag}(\lambda_1,\dots,\lambda_n)$ and and $B$ is similar to $\mbox{diag}(\ln\lambda_1,\ldots,\ln\lambda_n)$. Without loss of generality we assume that $B=\mbox{diag}(\ln\lambda_1,\ldots,\ln\lambda_n)$.

Set $\mu_s=\ln \lambda_s$ for $s=1,\ldots,n$. Now the $s^{th}$ component of $Dg(y)By-Bg(y)$ is
$\sum\limits_{j=1}\limits^n\frac{\partial g_s}{\partial y_j}\mu_jy_j-\mu_sg_s$, where $g_s$ is the $s^{th}$ component of $g$. Since $g_s$ is resonant, for illustration choosing 
one of monomials of $g_s$, saying $y^m=y_1^{m_1}\ldots y_n^{m_n}$, we have
\[
\sum\limits_{j=1}\limits^n\frac{\partial y^m}{\partial y_j}\mu_jy_j-\mu_sy^m=(\mu_1m_1+\ldots+\mu_nm_n-\mu_s)y^m=\left(\ln \frac{\lambda^m}{\lambda_s}\right)y^m=0,
\]
because $y^m$ is resonant and so $\lambda^m=\lambda_s$. This proves that $\sum\limits_{j=1}\limits^n\frac{\partial g_s}{\partial y_j}\mu_jy_j-\mu_sg_s=0$, and consequently $Dg(y)By-Bg(y)=0$. The lemma follows.
\end{proof}

\noindent{\bf Acknowledgements.} The author sincerely appreciates the referee for his/her careful reading and the excellent comments and suggestions,
which help me to improve this paper both in the mathematics and in the expressions. Also the referee pointed out
the existence of the reference \cite{Cu}, which I did not know previously.

\bigskip
\bigskip


\begin{thebibliography}{99}

\bibitem{AA} D. V. Anosov and V. I. Arnold, {\it Dynamical Systems I}, Springer-Verlag, Berlin, 1988.

\bibitem{Ar} V. I. Arnold, {\it Geometric Methods in Theory of Ordinary
Differential Equations}, 2nd Ed., Springer-Verlag, New York, 1988.

\bibitem{Be} G. R. Belitskii and V. Tkachenko, {\it One-Dimensional Functional
Equations}, Birkh\"{a}user, Berlin, 2003.

\bibitem{BC} W. A. Beyer and P. J. Channell,
A functional equation for the embedding of a homeomorphim of the
interval into a flow, {\it Lecture Notes in Math.}, Vol. {\bf
1163}, Springer-Verlag, New York, 1985, 1--13.

\bibitem{Bi} Yu. N. Bibikov,
Local Theory of Nonlinear Analytic Ordinary Differential
Equations, {\it Lecture Notes in Math.}, Vol. {\bf 702},
Springer-Verlag, Berlin, 1979.

\bibitem{Ch} K. T. Chen,
Equivalence and decomposition of vector fields about an elementary
critical point, {\it Amer. J. Math.} {\bf 85} (1963), 693--772.

\bibitem{CLW} S. N. Chow, Chengzhi Li and Duo Wang,
{\it Normal Forms and Bifurcation of Planar Vector Fields},
Cambridge University Press, Cambridge, 1994.

\bibitem{Cu} W. J. Culver,
On the existence and uniqueness of the real logarithm of a matrix, {\it Proc. Amer. Math. Soc.} {\bf 17} (1966), 1146--1151.

\bibitem{EP09} A. Enciso and D. Peralta--Salas, Existence and vanishing
set of inverse integrating factors for analytic vector fields,
{\it Bull. London Math. Soc.} {\bf 41} (2009), 1112--1124.

\bibitem{GGG09} I. A. Garc\'ia, H. Giacomini and M. Grau, The inverse integrating
factor and the Poincar\'e map, {\it Trans. Amer. Math. Soc.} {\bf 362} (2010), 3591--1612.

\bibitem{GM09} I. A. Garc\'ia and S. Maza, A new approach to center
conditions for simple analytic monodromic singularities, {\it J.
Differential Equations} {\bf 248} (2010), 363--380.

\bibitem{KP}  S. Kuksin and J. P$\rm\ddot{o}$schel,
On the inclusion of analytic symplectic maps in analytic
Hamiltonian flows and its applications, {\it Seminar on Dynamical
Systems}, S. Kuksin, V. Lazutkin and J. P$\rm\ddot{o}$schel (Eds),
Birkh$\rm\ddot{a}$user, Basel, 1978, 96--116.

\bibitem{La1} P. F. Lam,
Embedding a differential homeomorphim in a flow, {\it J.
Differential Equations} {\bf 30} (1978), 31--40.

\bibitem{La2}  P. F. Lam,
Embedding homeomorphims in $C^1-$flows, {\it Ann. Math. Pura
Appl.} {\bf 123} (1980), 11--25.

\bibitem{Li} Weigu Li,
{\it Normal Form Theory and its Applications} (in Chinese),
Science Press, Beijing, 2000.

\bibitem{LLZ} Weigu Li, J. Llibre and Xiang Zhang,
Extension of floquet's theory to nonlinear periodic differential
systems and embedding diffeomorphisms in differential flows, {\it
American J. Math.} {\bf 124} (2002), 107--127.

\bibitem{Pa} J. Palis, Vector fields generate few diffeomorphisms,
{\it Bull. Amer. Math. Soc.} {\bf 80} (1974), 503--505.


\bibitem{Zh1} Xiang Zhang, Analytic normalization of analytic integrable systems
and the embedding flows,  {\it J. Differential Equations} {\bf
244} (2008), 1080--1092.

\bibitem{Zh} Xiang Zhang, Embedding diffeomorphisms in flows in Banach spaces,
{\it Ergod. Th. $\&$ Dynam. Sys.} {\bf 29} (2009),  1349--1367.

\bibitem{Zh2} Xiang Zhang, Embedding smooth diffeomorphisms in flows, {\it J. Differential Equations}
{\bf 248} (2010), 1603--1616.

\bibitem{Zh3} Xiang Zhang, Analytic integrable
systems: analytic normalization and embedding flows, preprint.







\end{thebibliography}
\end{document}